\theoremstyle{remark}\newtheorem{remark}{Remark}[section]
\theoremstyle{plain}
	\newtheorem{theorem}[remark]{Theorem}
	\newtheorem{proposition}[remark]{Proposition}
	\newtheorem{lemma}[remark]{Lemma}
	\newtheorem{corollary}[remark]{Corollary}
\theoremstyle{definition}
	\newtheorem{definition}[remark]{Definition}
\newcommand{\abs}[1]{\left\vert#1\right\vert}
\newcommand{\G}{\Gamma}
\newcommand{\cE}{\mathcal{E}}
\newcommand{\cM}{\mathcal{M}}
\newcommand{\cS}{\mathcal{S}}
\newcommand{\cT}{\mathcal{T}}
\newcommand{\cW}{\mathcal{W}}
\newcommand{\cV}{\mathcal{V}}
\newcommand{\dual}[2]{\langle#1,\,#2\rangle}
\newcommand{\normBL}[1]{\left\Vert#1\right\Vert^\ast_{BL}}
\newcommand{\R}{{\mathbb R}}
\newcommand{\N}{{\mathbb N}}
\newcommand{\supp}[1]{\operatorname{supp}#1}
\newcommand{\p}{\partial}
\numberwithin{equation}{section}
\begin{document}

\title[]{Measure-valued solutions to nonlocal transport equations on networks}

\author{Fabio Camilli}
\address{Dipartimento di Scienze di Base e Applicate per l'Ingegneria, ``Sapienza'' Universit{\`a}  di Roma, Via Scarpa 16, 00161 Rome, Italy}
\email{camilli@sbai.uniroma1.it}

\author{Raul De Maio}
\address{Dipartimento di Scienze di Base e Applicate per l'Ingegneria, ``Sapienza'' Universit{\`a}  di Roma, Via Scarpa 16, 00161 Rome, Italy}
\email{raul.demaio@sbai.uniroma1.it}

\author{Andrea Tosin}
\address{Department of Mathematical Sciences ``G. L. Lagrange'', Politecnico di Torino, Corso Duca degli Abruzzi 24, 10129 Turin, Italy}
\email{andrea.tosin@polito.it}

\subjclass[2010]{35R02, 35Q35, 28A50}

\keywords{Network, transport equation, measure-valued solution, transmission conditions}

\begin{abstract}
Aiming to describe traffic flow  on road networks with long-range  driver interactions, we study a nonlinear transport equation  defined on an oriented network where the velocity field depends not only on the state variable but also on the distribution of the population. We prove existence, uniqueness and continuous dependence results of the solution intended in a suitable measure-theoretic sense. We also provide a representation formula in terms of the push-forward of the initial and boundary data along  the network and discuss an explicit example of nonlocal velocity field fitting our framework.
\end{abstract}

\maketitle
\section{Introduction}
In recent times there has been a considerable amount of literature devoted to the study of  evolution equations  in measures spaces. Indeed the measure-valued approach presents,  with respect to other approaches based on classical and weak solutions, some significant advantages: the population is represented by a probability distribution, providing a unified framework for both discrete and continuous models; short and long range interaction mechanisms are efficiently described by taking a velocity field depending on local terms, determined by  the geometry of the space, and nonlocal terms, regulated by the position of the other individuals, hence on the whole measure; aggregation phenomena that in a classical setting lead  to blow-up of the solution are plainly taken into account by the measure setting. The by now classical reference for evolution equation in measure spaces is the book \cite{ambrosio2008BOOK}, while we refer to \cite{canizo,cristiani2014BOOK,Frasca_Tosin,Piccoli_Rossi,piccoli2011ARMA} for various applications to the study of complex phenomena. However most of the literature about measure-valued equations considers these problems in the full space, because their study in bounded domains poses additional difficulties due in particular to the interpretation of the boundary conditions. For the specific case of a bounded interval, an interpretation of the boundary condition in a measure sense has been pursued in \cite{evers2015JDE,evers2016SIMA}, while in \cite{Gwiazda} a measure-valued transport equation on a sequence of intervals with a transmission condition at intersection points is considered.

Motivated by pedestrian and vehicular traffic modelling in urban areas, several models have been proposed for traffic flow on road networks, see \cite{Bressan,garavello2016BOOK,Garavello_Piccoli} and references therein. Most of these models are based on a fluid-dynamical approach and take into account only local interactions among drivers, the main purpose being to find appropriate rules at the junctions, namely the vertices of the network, to optimize the traffic flow.\par
In order to extend the measure-valued approach to networks, in  \cite{Cam_Dem_Tos} it was studied
the  linear transport equation
\begin{equation}\label{intro1}
	\p_t m + \p_x (v(x) m) = 0 \qquad \text{in } \Gamma\times [0,T]
\end{equation}
where $\G$ is an oriented network. Existence, uniqueness and continuous dependence results for the measure-valued solution to \eqref{intro1} were provided, along with a local representation formula on each arc. Even if this simplified model already presents some interesting peculiarities and difficulties due to the presence of the junction conditions,
nonlocal driver interactions were not included in the model since $v$ was assumed to depend only on the space variable.

The aim of this paper is to study measure-valued nonlinear transport equations on networks where the velocity field depends on the measure itself.  More precisely, we consider the nonlinear transport equation
\begin{equation}\label{intro2}
	\p_t m+ \p_x(v[m_t]m) = 0, \qquad \text{in } \Gamma\times [0,T]
\end{equation}
where the velocity  $v$ still depends on the $x$-variable, but also on the vehicle distribution $m_t$ at time $t$. To explain the main difference between \eqref{intro1} and \eqref{intro2} we observe that \eqref{intro1} is formally equivalent to a system of equations, one for each arc, coupled via the transmission conditions at the vertices. Instead, in \eqref{intro2} the evolution equation in each arc does not only depend on the distribution of the vehicles flowing into the arc from the junction but also on the (global) distribution $m_t $ at time $t$ on $\G$.

To show the well posedness of \eqref{intro2} we approximate the nonlinear transport equation by a sequence of linear problems obtained via a semi-discrete-in-time approximation of \eqref{intro2}. We define a partition of the time interval $[0,T]$ in a family of subinterval $[t_k, t_k+\Delta t]$ and on each of these intervals we solve the linear problem \eqref{intro1} with the nonlinear velocity $v[m_t]$ replaced by the linear one $v[m_{t_k}]$. In such a way we obtain a sequence of measure $\{m^{\Delta t}\}$ defined on $[0,T]$. Using the results on the linear problem, we prove that for $\Delta t \to 0^+$ the sequence $\{m^{\Delta t}\}$ converges (upon subsequences) to a measure $m\in\cM^+(\G\times [0,T])$ which is a solution of \eqref{intro2}. A continuous dependence result and a representation formula in terms of the push-forward of the initial and boundary data along the admissible paths on the network complete the study of \eqref{intro2}. We also analyze a specific example of velocity field to show that the measure approach allows us to consider   some significant aspects in the model such as local and nonlocal interactions, source data, statistical knowledge of the driver behaviour at junctions.

In more details, the paper is organized in the following way. In Section \ref{sec:notation} we recall some basic notations and preliminary definitions, while in Section \ref{sec:linear} we review  the  results proved in \cite{Cam_Dem_Tos}. In Section \ref{sec:existence} we introduce  the semi-discrete  approximation scheme and prove its convergence. Finally, in Section \ref{trafficmodel} we analyze a specific velocity field suitable for  vehicular traffic over a road network and  satisfying the setting of the paper.


\section{Notations and preliminary definitions} \label{sec:notation}
This section is devoted to notations and definitions that we shall use in the sequel. Some of these definitions are classical but not necessarily standard, thus we recall them for reader's convenience.

In our model, the distribution of particles on the network is represented  by a positive measure, hence we   introduce  an appropriate topology for the space of  measures. Let   $\cT$ be  a topological space endowed  with a distance  $d:\cT\times\cT \to \R$.
Define the norm $\|\cdot\|_{BL}$ as
$$\|\phi\|_{BL} = \|\phi\|_{\infty} + |\phi|_{L},$$
where
$$|\phi|_L = \sup_{\substack{x,\,y\in\cT \\ x\ne y}} \frac{|\phi(x) - \phi(y)|}{d(x,y)}$$
and let $BL(\cT)$ be  the Banach space of bounded and Lipschitz continuous functions equipped with the norm $\|\cdot\|_{BL}$. Let moreover $\cM(\cT)$ denote the space of finite Radon measures on $\cT$. We define a norm on this space by
\begin{equation}\label{norm}
\|\mu\|_{BL}^\ast = \sup_{\substack{\phi \in BL(\cT) \\ \|\phi\|_{BL}\leq 1}}|\dual{\mu}{\varphi}|.
\end{equation}
It is easy to see that if $\mu\in\cM^+(\cT)$ then $\normBL{\mu}=\mu(\cT)$. Moreover, even if the space $(\cM(\cT),\,\normBL{\cdot})$ is in general \emph{not} complete, the cone $\cM^+(\cT)$ is complete since it is a closed subset.

We observe that other norms  on $\cM(\cT)$, such as the total variation norm, may not be well suited for  transport problems where one wants to measure the distance between flowing mass distributions. Indeed with these norms the distance between two distinct Dirac masses is independent of the distance of their supports.

In the following we will employ  the property that a measure  $\mu\in\cM^+(\cT)$ can be represented as a (continuous) sum of elementary masses in the form
$$ \mu=\int_{\cT}\delta_x\,d\mu(x),$$
where $\int_\cT$ is intended as a Bochner integral (see \cite{evers2015JDE, evers2016SIMA}). Actually the previous formula suggests that to obtain some properties of a measure-valued solution to an evolution problem it is sufficient to study the corresponding propagation of a Dirac measure.

A \emph{network} $\Gamma=(\cV,\,\cE)$ is given by a finite collection of vertices $\cV:=\{x_i\}_{i\in I}$ and a finite collection of continuous non-self-intersecting arcs $\cE=\{e_j\}_{j\in J}$ whose endpoints belong to $\mathcal{V}$. Several parametrizations of the arcs in $\Gamma$ can be introduced; for our purposes every bounded arc $e_j \in \mathcal{E}$ is parametrized by a smooth injective function $\pi_j:[0,L_j]\subset\R\to\R^d$, where $L_j\in \R^+$. Alternatively, if $e_j$ is an unbounded arc terminating in a vertex $V$ we parametrize it by a smooth injective function $\pi_j : (-\infty, 0] \to \mathbb{R}^d$ such that $\pi_j(0) = V$; if instead it is an arc originating from $V$ we define the parametrization on $[0, +\infty)$ in such a way that $\pi_j(0) = V$. We assume that $\G$ is connected and oriented and that the maps $\{\pi_j\}_{j \in J}$ comply with the orientation of $\G$, i.e. if $x_i,x_j \in \cV$ are the vertices of an arc $e_k \in \cE$ oriented from $x_i$ to $x_j$, then  $\pi_k(0) = x_i$ and $\pi_k(L_k)=x_j$. To each function $\phi$ defined on defined on $\prod_{j\in J}e_j$ we associate the projection $(\phi_j)_{j\in J}$ defined on the parameter space as
\[
\phi_j(s):=\phi(\pi_j(t)) \qquad s\in[0,L_j], \quad j\in J.
\]
The integral of $\phi$ on $\G$ is naturally defined as
\[
\int_\G \phi(x)dx=\sum_{j\in  J}\int_{e_j}\phi(x)\,dx=\sum_{j\in J}\int_0^{L_j}\phi_j(s)ds.
\]
We provide $\Gamma\times [0,\,T]$  with the distance
$$ d_{\G}(x,\,y)+\abs{t-s}, \qquad (x,\,t),\,(y,\,s)\in\Gamma\times [0,\,T]  $$
where $d_{\G}$ is the minimum path distance on the network $\Gamma$, and we consider the spaces $\cM^+(\Gamma\times [0,\,T])$ and $\cM^+(\Gamma)$ endowed with the corresponding norms defined in \eqref{norm}. Throughout the paper we consider measures without Cantorian  part, so that for $\mu \in \cM^+(\G)$ the pairing
$$ \langle\mu,\phi\rangle := \int_\G\phi(x)d\mu(x) $$
is well defined for every $\phi\in BV(\G)$. 
The Cantorian measures are excluded because, for the application we are considering, this kind of measure does not have any significant interpretation.

Given a vertex $x_i\in\cV$, we say that an arc $e_j\in\cE$ is \emph{outgoing} (respectively, \emph{incoming}) if $x_i=\pi_j(0)$ (respectively, if $x_i=\pi_j(L_j)$). We denote by $\text{Out}(x_i)$ and  $d_O^{x_i}$  the set and the number of outgoing arcs and by  $\text{Inc}(x_i)$  and  $d_I^{x_i}$  the set and the number  of incoming arcs   in $x_i$. We set  $d^{x_i}:=d_I^{x_i}+d_O^{x_i}$ and we say that a vertex $x_i$ is \emph{internal} if $d_I^{x_i}\cdot d_O^{x_i}>0$,   a \emph{source} if $d_O^{x_i}=d^{x_i}$ and a \emph{sink} if $d_I^{x_i}=d^{x_i}$. Moreover, without loss of generality, we assume that for each source there exists a unique outgoing arc, i.e. $d_{O}^{x_i} = 1$.

In the model discussed in the paper the sources represent the vertices where the particles enter the network while the sinks are the vertices where they leave the network. Since the velocity may depend on the distribution of the particles on the whole network, in order to simplify the notation we prefer to consider a network without sinks, i.e. such that the terminal arcs always have infinite length. In any case, at the expense of a heavier notation, it is not difficult to include in the model also the contribution of the sinks.

From now on we denote by $\cS$ the subset of sources. These vertices represent the boundary of the network and we prescribe a \emph{boundary measure }
$$\sigma_0 = \sum_{x_i \in \cS}\sigma^i_0$$
where $\sigma^i \in \cM^+(\{x_i\}\times[0,T])$. We also  prescribe the \emph{initial mass distribution} of the particles in $\Gamma$ as a positive measure
$$ m_0=\sum_{j \in J}m_0^j $$
with $\supp{m_0^j}\subseteq e_j$.

\begin{remark}
Instead of the $\sigma_0^i$'s, another possibility is to consider as boundary data the flow measures $q_0^i = v^i_0 \cdot \sigma^i_0 \in \cM^+([0,T]\times \{x_i\})$, where the $v^i_0$'s are the velocities in $BV([0,T]; \mathbb{R}^+)$ at the sources. 

\end{remark}

As usual when dealing with differential equations on networks, the transition conditions at the vertices play a crucial role since they model the different behaviour of the particles at the junctions (e.g., traffic lights, priority rules). We consider a $J\times J$  \emph{distribution matrix} $\{p_{kj}(t)\}_{k,\,j=1}^{J}$ such that $P(t)$ is a stochastic matrix for every $t \in [0,T]$, i.e.
\begin{equation} \label{eq:pijk.sum.1}
\begin{split}
	 0\leq p_{kj}(t)\leq 1,\qquad \sum_{j=1}^{J}p_{kj}(t)=1\\
    p_{kj}(t)=0\quad \text{if either $e_k\cap e_j=\emptyset$ or $e_j\to e_k$.}
    \end{split}
\end{equation}
where $e_j\to e_k$ means that $e_j$ comes before $e_k$ in the assigned  orientation of the network. Here $p_{kj}(t)$ represents the fraction of mass which at time $t$ flows from the arc $e_k$ to the arc $e_j$ and \eqref{eq:pijk.sum.1} implies that the mass cannot concentrate at the vertices. Since we consider measures $m\in\cM^+(\G\times [0,T])$ without Cantorian part, we assume that $p_{kj}\in BV([0,T])$ so that $p_{kj}\cdot m$ has no Cantorian part as well.

In order to describe the transport of the measures on the network, we introduce a \emph{nonlinear velocity field} $v: \cM^+(\G) \times \G \to \R$ with the following properties:
\begin{itemize}
\item[(H1)] $v$ is nonnegative and has a maximum value $V_{max}>0$;
\item[(H2)] $v$ is Lipschitz continuous with respect to the state variable, i.e. on each arc $e_j \in \cE$
$$|v[m](x) - v[m](y)|\leq L|x - y|,\quad \text{$\forall m \in \cM^+(\G)$, $x,y \in e_j$.}$$
\item[(H3)] $v$ is Lipschitz continuous with respect to the measure, i.e.
$$|v[m_1](x) - v[m_2](x)| \leq L \|m_1 - m_2\|_{BL}^\ast \quad\text{$\forall x \in \G$, $m_1,m_2 \in \cM^+(\G)$.}$$
\end{itemize}
It is important to observe that we do not require the continuity of the velocity field on the whole network but only inside the edges. Note also that the dependence of $v$ on the measure $m$ is global, i.e. the velocity depends on the entire support of $m$ on $\G$.

When considered on a single arc isomorphic to $\R$, the previous assumptions coincide with the ones for the corresponding nonlinear transport model in \cite{cristiani2014BOOK}. Moreover, for a fixed $m\in\cM^+(\G)$, the velocity field $v[m]$ satisfies the hypotheses of the linear transport problem considered in \cite{Cam_Dem_Tos}.

We conclude this section with a notion of $p$-moment for finite measures on networks. This is a straightforward generalization of the corresponding concept in the Euclidean space and we give some details for the reader's convenience.
\begin{definition}
Let $p\in\N$ and $x\in\G$. The $p$-moment centered at $x$ of a finite measure $m\in\cM^+(\G)$ is defined by
\begin{equation}\label{momentum}
\dual m{d_{\G}(\cdot,x)^p} := \int_{\G}d_{\G}(y,x)^p dm(y).
\end{equation}
\end{definition}

\begin{lemma}\label{lemmoment}
A finite measure $m\in\cM^+(\G)$ has finite $p$-moment if and only if it has finite $p$-moment on every arc $e_j\in\cE$ such that $\mathcal{L}(e_j)=+\infty$.
\end{lemma}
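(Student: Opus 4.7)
The plan is to reduce the claim to a finite-arc decomposition of the $p$-moment integral and handle bounded and unbounded arcs separately.

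First, I would observe that since $\G$ is connected with finitely many arcs, the notion of finite $p$-moment does not depend on the choice of the base point $x$. Indeed, the triangle inequality for $d_\G$ together with the elementary bound $(a+b)^p \leq 2^{p-1}(a^p+b^p)$ yields
\[
d_\G(y,x')^p \leq 2^{p-1}\bigl(d_\G(y,x)^p + d_\G(x,x')^p\bigr),
\]
so that finiteness of \eqref{momentum} at $x$ implies finiteness at any $x'\in\G$ (using that $m$ itself is finite). Hence it suffices to prove the statement for a single, conveniently chosen center $x$.

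Next, I would use the definition of the integral over $\G$ to write
\[
\dual{m}{d_\G(\cdot,x)^p} = \sum_{j\in J}\int_{e_j} d_\G(y,x)^p\,dm(y),
\]
a finite sum because $|J|<\infty$. The $(\Rightarrow)$ direction is then immediate by nonnegativity: each summand is dominated by the total $p$-moment and, in particular, this is so for the arcs with $\mathcal{L}(e_j)=+\infty$.

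For the converse direction, I would split the arcs into bounded and unbounded ones. If $e_j$ is bounded, say with parameter interval $[0,L_j]$, then for every $y\in e_j$ one has $d_\G(y,x)\leq d_\G(\pi_j(0),x)+L_j =: C_j<\infty$, where $C_j$ is finite thanks to the connectedness of $\G$ and the finite number of arcs. Therefore
\[
\int_{e_j} d_\G(y,x)^p\,dm(y)\leq C_j^{\,p}\,m(e_j)\leq C_j^{\,p}\,m(\G) < \infty,
\]
since $m$ is a finite measure. The unbounded arcs contribute a finite amount by hypothesis. Summing the finitely many contributions gives a finite total, which proves the $(\Leftarrow)$ direction.

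There is no real obstacle here: the only point deserving care is the uniformity of the bound $C_j$ on bounded arcs, which hinges on the connectedness of $\G$ and the finiteness of $|J|$ stated in the definition of network, together with the independence of the notion of finite $p$-moment from the base point.
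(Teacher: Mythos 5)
Your proof is correct and follows essentially the same route as the paper: decompose the moment arc by arc, bound the finitely many bounded arcs by $\max_{e_j} d_\G(\cdot,x)^p\cdot m(\G)$, and invoke the hypothesis on the unbounded arcs. The only cosmetic difference is that the paper additionally applies $(a+b)^p\le 2^{p-1}(a^p+b^p)$ on each unbounded arc to pass to the arc's own parametrization $\int_{[0,+\infty)}|y|^p\,dm^{j}(y)$, thereby making the phrase \emph{finite $p$-moment on $e_j$} concrete in local coordinates, whereas you read it as the restriction of $\int d_\G(\cdot,x)^p\,dm$ to $e_j$; the two readings are equivalent up to the bounded additive constant $d_\G(\pi_j(0),x)$.
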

\begin{proof}
Assume without loss of generality that $x=x_i \in  \cV$ and set  $d(\cdot) = d_{\G}(\cdot, x_i)$. Given a measure $m \in \cM^+(\G)$,  $m = \sum_{j\in J} m^{j}$ with   $\supp\{m^j\}\subseteq e_j$, we can write
\begin{align*}
\langle m; d^p\rangle = \sum_{\substack{j\in J \\ \mathcal{L}(e_j) < +\infty}}\langle m^{j}; d^p\rangle + \sum_{\substack{j\in J \\ \mathcal{L}(e_j) = +\infty}}\langle m^{j}; d^p\rangle.
\end{align*}
If $e_j \in \mathcal{E}$ has finite length, then $d(\cdot)$ has its maximum value $\overline d_j$ on $e_j$. Then for
$$ \overline d := \max\limits_{\substack{j\in J \\ \mathcal{L}(e_j) < +\infty}}\overline d_j $$
we have
$$\sum_{\substack{j\in J \\ \mathcal{L}(e_j) < +\infty}}\langle m^{j}; d^p\rangle \leq \sum_{\substack{j\in J \\ \mathcal{L}(e) < +\infty}}
\overline d_j\cdot m^j(e_j)\le   \overline d\cdot m(\G).$$
On the other hand, if $\mathcal{L}(e_j)=+\infty$ and $e_j=\pi_j([0,+\infty))$ with $x_k=\pi_j(0)\in \cV$, by Jensen's inequality we have
\begin{align*}
  \langle m^{j}; d^p \rangle&= \int_{[0,+\infty)}(|y| + d(x_k))^p dm^{j}(y)\\
&\leq 2^{p-1}\int_{[0,+\infty)}|y|^p dm^{j}(y) +2^{p-1}d(x_k)^p m(e_j).
\end{align*}
By the last inequality, the statement easily follows.
\end{proof}

The finite $p$-moment property of a measure $m$ is clearly independent of the point $x\in\G$ fixed in the definition \eqref{momentum}.

\section{The linear transport problem  }\label{sec:linear}
The aim of this section is twofold. In the first part, we briefly review the results for the linear problem in \cite{Cam_Dem_Tos}, since they are an important tool for developing the theory of the nonlinear problem via an approximation procedure. Hence, we give a new representation formula for the measure-valued solution of the linear problem (afterwards extended also to the nonlinear problem), which generalizes the well-known push-forward formula to the network setting.

In this section we assume that the velocity field $v$ is independent of $m$, i.e. $v[m](x) = v(x)$, and we consider the linear transport problem
\begin{equation}\label{pbl:tl}
	\begin{cases}
		\p_t m + \partial_x(v(x)m) = 0 & \text{in } \G\times[0,T] \\
		m_{t=0} = m_0 \\
		m_{x=x_i} = \sigma^i_0, &\forall x_i\in\mathcal{S} \\
		m^{j}_{x=x_i}=\sum_{k\in \text{Inc}(x_i)}p_{kj}\cdot m^{k}_{x=x_i} & \forall\,x_i\in\mathcal{V}\setminus\mathcal{S},\ \forall\,e_j\in\text{Out}(x_i),
	\end{cases}
\end{equation}
with  $v$, $m_0$, $\sigma_0$  satisfying the assumptions set in Section \ref{sec:notation}.

To explain the meaning of an initial/boundary condition for a measure solution, we recall that, owing to the disintegration theorem (cf.~\cite[Section 5.3]{ambrosio2008BOOK}), we can decompose a measure $m\in\cM^+([0,\infty)\times[0,\infty))$ by projections on the coordinate axes:
\begin{itemize}
\item Using the projection with respect to $x$ we can write
\begin{equation}
	m(dx\,dt)=dm_t(x)\otimes dt,
	\label{eq:dec.space-time}
\end{equation}
where $dt$ is the Lebesgue measure in time in $\R^+_0$ and $dm_t\in\cM^+(\R^+_0\times\{t\})\equiv\cM^+(\R^+_0)$ for a.e. $t\in\R^+_0$.
Hence assigning an initial condition at $t=0$ amounts to prescribing the trace of $m$ on the fiber $\R^+_0\times\{0\}$ according to the decomposition~\eqref{eq:dec.space-time}.\par
\item Similarly, projecting with respect to $t$, we can write
\begin{equation}
	m(dx\,dt)=\frac{dm_x(t)}{v(x)}\otimes dx,
	\label{eq:dec.time-space}
\end{equation}
where $dx$ is the Lebesgue measure in space in $\R^+_0$ and $dm_x\in\cM^+(\{x\}\times\R^+_0)\equiv\cM^+(\R^+_0)$ for a.e. $x\in\R^+_0$. Hence assigning a boundary condition at $x=0$ amounts to prescribing the trace of $m$ on the fiber $\{0\}\times\R^+_0$ according to the decomposition~\eqref{eq:dec.time-space}.
\end{itemize}

In order to give a suitable notion of measure-valued solution to~\eqref{pbl:tl}, we preliminarily introduce integration by parts formulas. Let $C^\infty_{0}(\G\times[0,T])$ be the space of continuous functions in $\G\times[0,T]$ which are infinitely differentiable in $e_j\times[0,T]$ for each $j\in J$ and vanish for $x=\pi_j(t)\to+\infty$ (in local coordinates) if $e_j$ is an unbounded  arc. Then, given $m \in \cM^+(\G\times[0,T])$ and  $f \in C^\infty_0(\G \times [0,T])$,  we define
\begin{equation}\label{int1}
\begin{split}
\dual{\p_t m}f :&= -\dual m{\p_t f} + \dual{m_{T}}{f}- \dual{m_{0}}f\\
&=- \int_0^T \int_\G \p_t f(x,t)d m_t(x)dt + \int_{\G}f(x,T)d m_T(x) - \int_{\G}f(x,0)d m_0(x);
\end{split}
\end{equation}
and
\begin{equation}\label{int2}
\begin{split}
\dual{\p_x(v(x)m)}{f} :&= -\dual m{v(x)\p_x f} - \dual {\sigma_0}f\\
&=-\int_0^T\int_\G v(x) \p_x f(x,t)dm_t(x)dt - \sum_{x_i \in \cS}\int_{[0,T]}f(x_i,t)d\sigma^i_0(t).
\end{split}
\end{equation}
In particular, if $f \in C^\infty_0(\G\times [0,T])$ and $\supp \{f\} \subset e_k\times[0,T]$, then the last formula reads as
\begin{equation}\label{int3}
\begin{split}
&\dual {\p_x(v(x)m)}f  = -\dual {m^k}{v(x)\p_x f} + \dual {m^k_{x=x_i}}f - \dual{m^k_{x=x_j}}f=\\
&- \int_0^T\int_\G v(x) \p_x f(x,t) d m^k_t(x) + \int_{[0,T]}f(x_i,t) d m^k_{x=x_i}(t) - \int_{[0,T]} f(x_j,t) d m^k_{x=x_j}(t),
\end{split}
\end{equation}
where $x_i=\pi_k(0)$, $x_j=\pi_k(L_j)  \in \cV$.
\begin{definition}\label{def:sol}
A measure-valued solution to \eqref{pbl:tl} is a finite measure $m \in \cM^+(\G\times[0,T])$ such that for every $f \in C^1_0(\G\times[0,T]),$
\[
\dual{m_{t=T} - m_0}{f} - \dual{\sigma_0}{f} = \dual{m}{\p_t f + v(x)\p_x f},
\]
and $\forall$ $x_i \in \mathcal{V}\setminus\mathcal{S}$, $\forall$ $e_j \in \text{Out}(x_i)$,
\[
\dual{m^{j}_{x=x_i}}{f} = \sum_{k \in \text{Inc}(x_i)}\dual{m^{k}_{x=x_i}}{p_{kj}f}.
\]
\end{definition}
\begin{remark}
In Definition~\ref{def:sol} a Neumann-type boundary condition on the sinks of $\cW$ is implicitly assumed. Another possibility is that of sticking boundaries considered in \cite{evers2015JDE, evers2016SIMA}.
\end{remark}

In the next theorem we summarize the main results proved in \cite{Cam_Dem_Tos}.
\begin{theorem}\label{existencenet}
There exists a unique measure $m\in\cM^+(\Gamma\times [0,\,T])$ which is a solution to \eqref{pbl:tl} in the sense of Definition \ref{def:sol}. Moreover:
\begin{enumerate}
\item[i)] Given initial data $m_0^1, m_0^2\in\cM^+(\Gamma\times\{0\})$, boundary data $\sigma_0^1,\sigma_0^2\in\cM^+(\cS\times [0,\,T])$ and denoted by $m_1, m_2 \in \cM^+(\Gamma\times[0,T])$ the corresponding solutions, there exists a constant $C=C(T)>0$ such that
\begin{equation*}
	\sup_{[0,T]}	\|m_t^2-m_t^1 \|^{\ast}_{BL}\leq C\left(\normBL{m_0^2-m_0^1}+\normBL{\sigma_0^2-\sigma_0^1}\right).
\end{equation*}
\item[ii)] There exists a constant $C=C(T)>0$ such that
\begin{equation*}
	\normBL{m_t-m_{t'}}+\normBL{\nu_L\llcorner[0,\,t]-\nu_L\llcorner[0,\,t']}\leq C\abs{t-t'}+\sigma_0((t',\,t])
\end{equation*}
for all $t',\,t\in [0,\,T]$ with $t'<t$.
\end{enumerate}
\end{theorem}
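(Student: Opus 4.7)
The plan is to construct the solution explicitly by a push-forward-along-characteristics procedure, processing the arcs in order of their graph distance from the sources. On a single arc $e_j$ the Lipschitz hypothesis on $v$ gives a well-defined flow $\Phi^j_{s,t}:[0,L_j]\to [0,L_j]$ generated by $\dot X=v_j(X)$, and the solution restricted to $e_j$ at time $t$ is the sum of two push-forward pieces: the initial datum $m_0^j$ transported by $\Phi^j_{0,t}$, plus a boundary datum $\mu^j_{\text{in}}\in\cM^+(\{x_i\}\times[0,T])$ entering $e_j$ from its initial vertex $x_i$, disintegrated in time as in~\eqref{eq:dec.time-space} and transported slice-by-slice by $\Phi^j_{s,t}$. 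Mass that reaches the terminal endpoint of $e_j$ is accounted for in the trace at the downstream vertex. At each source $x_i\in\cS$ with unique outgoing arc $e_j$ one simply sets $\mu^j_{\text{in}}=\sigma_0^i$; at an internal vertex the $\mu^j_{\text{in}}$'s must be built from the traces of the already-constructed solution on the incoming arcs, so one processes the vertices in topological order and terminates in finitely many steps.

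The delicate point is defining the vertex trace $m^k_{x=x_i}$ on an incoming arc $e_k$ so that the fourth line of~\eqref{pbl:tl} is meaningful. Because we only admit measures without Cantorian part, the decomposition~\eqref{eq:dec.time-space} applied on $e_k$ identifies $m^k_{x=x_i}\in\cM^+([0,T])$ with the flow measure crossing $x_i$ along characteristics, i.e.\ the time-push-forward of the portion of $m_0^k$ and $\mu^k_{\text{in}}$ that reaches $x_i$ before time $T$. With this trace in hand the transmission condition reduces to the scalar identity
\[
\mu^j_{\text{in}}=\sum_{k\in\text{Inc}(x_i)}p_{kj}\cdot m^k_{x=x_i},
\]
which by~\eqref{eq:pijk.sum.1} defines a positive measure in $\cM^+(\{x_i\}\times[0,T])$ with no Cantorian part thanks to $p_{kj}\in BV([0,T])$. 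A direct integration by parts using~\eqref{int1}--\eqref{int3} arc-by-arc then shows that the measure $m:=\sum_j m^j$ so constructed satisfies Definition~\ref{def:sol}.

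For uniqueness I would use duality: given two solutions $m^1,m^2$ with identical data, test functions $f\in C^1_0(\G\times[0,T])$ obtained by solving the backward characteristic ODE on each arc force $\dual{m^1-m^2}{\p_t f+v\p_x f}=0$ for a rich enough class, and the vertex identities propagate zero from sources downstream. Continuous dependence~(i) is then an immediate consequence of the push-forward representation, since the flows $\Phi^j_{s,t}$ depend only on $v$ (not on the data), the BL norm is stable under Lipschitz push-forward, and a Gronwall-type argument over the finite depth of the graph produces a constant $C(T)$. Finally, for~(ii), testing Definition~\ref{def:sol} with $f\in BL(\G)$, $\norm{f}_{BL}\le 1$, gives
\[
\dual{m_t-m_{t'}}{f}=\int_{t'}^t\dual{m_s}{v\,\p_x f}\,ds+\text{(boundary contribution over }(t',t]),
\]
and then $\norm{v\,\p_x f}_\infty\le V_{\max}$ together with the uniform mass bound $m_s(\G)\le m_0(\G)+\sigma_0(\cS\times[0,T])$ yield the required $C|t-t'|+\sigma_0((t',t])$ estimate. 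The hard part is the rigorous handling of the vertex traces and the verification that the push-forward construction preserves the transmission conditions; the no-Cantorian-part hypothesis and the $BV$-regularity of the $p_{kj}$ are precisely what makes this work.
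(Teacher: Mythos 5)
You should first note that the paper does not actually prove Theorem \ref{existencenet}: it is explicitly a summary of results from \cite{Cam_Dem_Tos}, so there is no in-paper proof to match. That said, your strategy --- construct $m$ arc by arc as the push-forward of $m_0^j$ plus the time-disintegrated inflow measure, read off the downstream trace as the arrival-time push-forward, and close the system with the transmission condition --- is exactly the mechanism encoded in formulas \eqref{candidate}--\eqref{outnu} and in the representation formula of Theorem \ref{existence}, and your arguments for (i) (the flow maps are identical for both data sets since $v$ is independent of $m$, and the BL norm is stable under Lipschitz push-forwards) and for (ii) (test with time-independent $f$, use $\abs{v\,\p_x f}\le V_{max}$ and the uniform mass bound) are sound in outline.

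The genuine gap is the claim that ``one processes the vertices in topological order and terminates in finitely many steps.'' A topological order of the vertices exists only when the oriented network is acyclic, whereas the paper explicitly admits directed cycles: see the classification of paths after Definition \ref{path}, where a path may wind around a cycle $(e_{j_{n_0}},\dots,e_{j_{n_0+k_0}})$ indefinitely. On such a network the inflow measure $\mu^j_{\text{in}}$ at a vertex of the cycle depends on traces that in turn depend on $\mu^j_{\text{in}}$ itself, so your recursion never closes. The standard repair uses (H1) together with the finiteness of $\cE$: since $v\le V_{max}$ and $L_{\min}:=\min_j L_j>0$ over the bounded arcs, any characteristic crosses at most $\lceil V_{max}T/L_{\min}\rceil$ junctions during $[0,T]$, so one can induct on the number of junction crossings (equivalently, run the construction on successive time slabs of length $L_{\min}/V_{max}$, or set it up as a fixed-point problem for the finite family of inflow measures). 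Without this, the construction, the ``propagate zero downstream'' uniqueness argument, and the ``finite depth of the graph'' Gronwall step in (i) are all undefined on cyclic networks. A smaller omission: statement (ii) also controls the increment of the sink flux $\nu_L$, which your argument does not address (though the paper itself leaves $\nu_L$ undefined here, having excluded sinks from the model).
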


The next result is  a   representation formula    which characterizes the solution $m$ of \eqref{pbl:tl} in terms of
the distribution matrix $P(t)$ and of the  push-forward of the initial and boundary data on the paths of the network.
\begin{definition}\label{path}
Given $x\in \G$, a \emph{path} $\gamma$ starting from $x$ is a sequence of edges $(e_{j_0},e_{j_1},\dots,e_{j_n},\dots)$ where $ e_{j_i}\cap  e_{j_{i+1}}=\{x_{j_i}\}\in\cV$ and $e_{j_i}\to e_{j_{i+1}}$ for $i=0,1,2,\dots$; $e_{j_0}$ is the sub-edge  with endpoints $x$ and  $x_{j_0}\in \cV$; the length $\mathcal{L}(\gamma)$ of $\gamma$ is infinite. We denote by $\mathcal{A}(x)$ the set of  paths $\gamma$ starting from $x$.
\end{definition}

Since the network $\G$ is oriented and $\cE$ finite, a path $\gamma$ is necessarily of one of the following two types:
\begin{itemize}
\item $\gamma$ is composed by a finite number of arcs and  the last  one $e_{j_n}$ has infinite length;
\item $\gamma$ is composed by an infinite number of arcs and there exists $n_0, k_0\in \N$ such that for $n\ge n_0$, $\gamma$ is given by a cycle $(e_{j_{n_0}},\dots,e_{j_{n_0+k_0}})$ with $e_{j_{n_0+k_0}}=e_{j_{n_0}}$.
\end{itemize}
We denote by $\Phi^\gamma$ the flow map associated to the velocity field $v$ restricted to $\gamma$, i.e.  $\Phi_s^\gamma(x,s) = x$ and  there are
$t_0:=s<t_1<\dots<t_n<\dots$ such that for every $m=0,1,\dots$ we have $\Phi^\gamma([t_m,t_{m+1}])\subset e_{j_m}$ and
\begin{equation}\label{hidden_ODE}
\frac{d}{dt}\Phi^{\gamma}_t(x,s) = v (\Phi^{\gamma}_{t}(x,s)), \qquad t \in[t_m,t_{m+1}).
\end{equation}
We define the exit times from the arc $e_{j_k}=\pi_{j_k}([0, L_{j_k}])$ of $\gamma$  as
\begin{align*}
&\theta^\gamma_{0}(x,s)=\inf\{t\ge s:\, \Phi^{\gamma}(x,s)=\pi_{j_0} (L_{j_0})\}, \\
&\theta^\gamma_{k}(x,s)=\inf\{t\ge \theta^\gamma_{k-1}(x,s):\, \Phi^{\gamma}(x,s)=\pi_{j_k} (L_{j_k})\},\quad k\geq 1
\end{align*}
and we associate to each $(x,s)\in \G\times[0,T]$ and to each $\gamma \in \mathcal{A}(x)$ a coefficient $p_{\gamma}(x,s) \in [0,1]$ defined by
\begin{equation}\label{pgamma}
p_{\gamma}(x,s) := \prod_{k } p_{{j_k}{j_{k+1}}}(\theta^\gamma_{k}(x,s)),
\end{equation}
where $ p_{{j_k}{j_{k+1}}}$ are the entries of the distribution matrix $P$ defined in \eqref{eq:pijk.sum.1}. The coefficient   $p_{\gamma}(x,s)$ can be interpreted as the fraction of the total mass transported along the path $\gamma$. Due to the properties of $P$, it follows that
\begin{equation*}
p_{\gamma}(x,s) \in [0,1], \qquad \sum_{\gamma \in \mathcal{A}(x)}p_{\gamma}(x,s)=1.
\end{equation*}
\begin{theorem}\label{existence}
If $m  \in \cM^+(\G\times[0,T])$ is a solution of \eqref{pbl:tl}, then for any $t\in [0,T]$, $m_t$ is given by
\begin{equation}\label{globchar}
m_t = \int_\G \sum_{\gamma \in \mathcal{A}(x)}\delta_{(\Phi^\gamma_t(x,0),t)}p_{\gamma}(x,0)dm_0(x)
	+\sum_{x_i \in \cS}\int_{[0,t]}\sum_{\gamma \in \mathcal{A}(x_i)}\delta_{(\Phi^\gamma_{t}(0,s),t)}p_{\gamma}(0,s)d\sigma_0^i(s).
\end{equation}
\end{theorem}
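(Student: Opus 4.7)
The plan is to verify that the right-hand side of \eqref{globchar} defines a measure-valued solution to the linear problem \eqref{pbl:tl} in the sense of Definition \ref{def:sol}; once this is done, the uniqueness statement in Theorem \ref{existencenet} forces it to coincide with $m_t$. I would organise the argument in three stages.

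First, I reduce to Dirac data. By linearity of \eqref{pbl:tl} and the superposition encoded in the Bochner representations $m_0 = \int_\G \delta_x\,dm_0(x)$ and $\sigma_0^i = \int_{[0,T]} \delta_{(x_i,s)}\,d\sigma_0^i(s)$, it is enough to check the formula separately for a pure initial datum $m_0 = \delta_{\bar x}$ (with $\sigma_0 = 0$) and for a pure boundary datum $\sigma_0^i = \delta_{\bar s}$ (with $m_0 = 0$); these two cases produce respectively the two summands on the right of \eqref{globchar}.

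Second, for $m_0 = \delta_{\bar x}$ I consider the candidate
\begin{equation*}
\mu_t := \sum_{\gamma \in \mathcal{A}(\bar x)} p_\gamma(\bar x, 0)\,\delta_{\Phi^\gamma_t(\bar x, 0)}.
\end{equation*}
To test the transport identity I pair $\mu_t$ with $f \in C^1_0(\G\times[0,T])$ and invoke \eqref{hidden_ODE}: on each interval $[t_m, t_{m+1})$ during which $\Phi^\gamma$ remains in the single arc $e_{j_m}$, the chain rule gives $\frac{d}{dt} f(\Phi^\gamma_t, t) = (\partial_t f + v\,\partial_x f)(\Phi^\gamma_t, t)$, and summing over the pieces of $\gamma$ telescopes into boundary contributions at the exit times $\theta^\gamma_k(\bar x, 0)$, i.e.\ at the vertices crossed by $\gamma$. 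The transmission condition at an internal vertex $x_i$ is then equivalent to the factorisation $p_\gamma = p_{\gamma'} \cdot p_{j_k j_{k+1}}(\theta^\gamma_k)$ whenever $\gamma$ extends a shorter path $\gamma'$ by appending $e_{j_{k+1}}$, which is exactly how \eqref{pgamma} is built. The case of a Dirac boundary datum $\sigma_0^i = \delta_{\bar s}$ at a source $x_i$ is analogous: $\Phi^\gamma_t(0,\bar s)$ replaces $\Phi^\gamma_t(\bar x,0)$, and the trace decomposition \eqref{eq:dec.time-space} is used to identify the boundary term at $x=x_i$ with $\delta_{\bar s}$.

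The principal technical obstacle is the treatment of paths that cycle through a vertex infinitely many times, so that $\mathcal{A}(\bar x)$ is genuinely countable. Two points must be settled: (i) the normalisation $\sum_{\gamma} p_\gamma(\bar x, 0) = 1$ pointwise, which follows by induction on the depth of $\gamma$ from the stochasticity in \eqref{eq:pijk.sum.1}; and (ii) the dominated-convergence justification allowing the infinite sum to commute with the pairing against $f$, for which positivity of the weights together with $\|f\|_\infty < \infty$ and the $BV$-regularity of the $p_{kj}$ furnish an integrable majorant. Once these are in place, the remaining verifications collapse to routine method-of-characteristics computations on a single arc, and uniqueness from Theorem \ref{existencenet} closes the argument.
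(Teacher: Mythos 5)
Your argument is correct in outline, but it takes a genuinely different route from the paper. You treat the right-hand side of \eqref{globchar} as a candidate measure, verify by the method of characteristics that it is a solution of \eqref{pbl:tl} in the sense of Definition \ref{def:sol} (after reducing to Dirac data), and then conclude by the uniqueness statement of Theorem \ref{existencenet}. The paper argues in the forward direction instead: starting from the solution $m$, it invokes the single-arc trace formulas \eqref{candidate}--\eqref{outnu} to represent $m^j_t$ and $m^j_{x=x_i}$ on each arc, concatenates these local push-forwards across a junction via the transmission condition $m^k_{x=V}=p_{1k}\cdot m^1_{x=V}$ and the splitting $m^1_t=\sum_k(p_{1k}\circ\theta^\gamma_0)\cdot m^1_t$, and handles general networks by iterating arc by arc. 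Your approach buys independence from the local representation lemma at the price of leaning on uniqueness, and it forces you to confront head-on the two issues you rightly flag (the normalisation $\sum_\gamma p_\gamma=1$ and the interchange of the countable path sum with the pairing), which the paper largely sidesteps by working one junction at a time, where the relevant set of paths is finite. Two places where your sketch should be firmed up before it is a proof: (a) the vertex condition for the candidate is not merely ``equivalent to the factorisation'' of $p_\gamma$ in \eqref{pgamma} --- you must actually identify the traces $\mu^j_{x=x_i}$ of the candidate through \eqref{int3}, grouping paths by their truncation at $x_i$ and using $\sum_j p_{j_k j}=1$ from \eqref{eq:pijk.sum.1} to resum the tails; and (b) the reduction to Dirac data needs more than formal linearity: to pass from discrete to general $(m_0,\sigma_0)$ you should invoke the continuous-dependence estimate of Theorem \ref{existencenet}, item i), together with the measurability of $x\mapsto p_\gamma(x,0)$ (which is only $BV$, since the $p_{kj}$ are $BV$ in time), which is what legitimises the Bochner superposition you start from.
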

In order to prove the representation formula~\eqref{globchar} we preliminarily recall a characterization of the traces of the solution $m$ of \eqref{pbl:tl} on the fibers $e_j\times\{t\}$ and $\{x_i\}\times[0,\,t]$, where $x_i=\pi_j(L_j)$, in terms of the transport of the initial and boundary data inside $e_j$ (see \cite{Cam_Dem_Tos}).

\begin{lemma}
Let $e_j \in \cE$, then
\begin{align}
	m^j_{t} =\int_{[0,\,\max\{0,\,\tau^{-1}(t)\}]}\,\delta_{\Phi_t(x,\,0)}\,dm_0^j(x)
		+\int_{(\max\{0,\,\varsigma^{-1}(t)\},\,t]}\delta_{\Phi_t(0,\,s)}\,dm_{x=\pi_j(0)}^j(s)\label{candidate}\\
	m^j_{x=\pi_j(L_j)} =\int_{(\max\{0,\,\tau^{-1}(t)\},\,L_j]}\delta_{\tau(x)}\,dm_0^j(x)
		+\int_{[0,\,\max\{0,\,\varsigma^{-1}(t)\}]}\delta_{\varsigma(s)}\,dm_{x=\pi_j(0)}^j(s),
	\label{outnu}
\end{align}
where $\Phi$ is the flow map associated to the velocity $v$ over $e_j\in\cE$ and $\tau$, $\varsigma$ are defined as
\begin{align}
	\tau(x) &= \inf\{s\geq 0\,:\,\Phi_s(x,0)=L_j\}, \label{tau} \\
	\varsigma(t) &= \inf\{s\geq t\,:\,\Phi_s(0,t)=L_j\} \nonumber.
\end{align}
\end{lemma}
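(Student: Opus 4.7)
The plan is to exploit the linearity of the transport equation on a single arc, combined with the representation of positive Radon measures as Bochner integrals of Dirac masses, to reduce the statement to the case of atomic data.

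First, I would fix the arc $e_j$ and test the equation against $f \in C^\infty_0(\G\times[0,T])$ with $\supp{f} \subset e_j\times[0,T]$. The integration-by-parts identity \eqref{int3} then shows that the restriction $m^j$ satisfies a one-dimensional linear transport equation on $e_j$ with inflow datum $m^j_{x=\pi_j(0)}$ prescribed at the source end and unknown outflow trace $m^j_{x=\pi_j(L_j)}$ at the sink end. Thus, for the purpose of establishing \eqref{candidate} and \eqref{outnu}, the dynamics inside a single arc decouples from the rest of the network, and it is enough to solve this one-dimensional problem with the prescribed inflow and initial data.

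Next, I would use the disintegrations
\[
m_0^j = \int_{[0,L_j]}\delta_x\,dm_0^j(x), \qquad m^j_{x=\pi_j(0)} = \int_{[0,T]}\delta_s\,dm^j_{x=\pi_j(0)}(s),
\]
together with linearity, to reduce matters to a single Dirac initial condition $\delta_{x_0}$ or a single Dirac boundary condition $\delta_{s_0}$. For the initial Dirac, the classical ODE theory associated with \eqref{hidden_ODE} guarantees that the characteristic $t\mapsto \Phi_t(x_0,0)$ remains in $e_j$ for $t\in[0,\tau(x_0))$ and exits through $\pi_j(L_j)$ exactly at $t=\tau(x_0)$; hence the transported mass equals $\delta_{\Phi_t(x_0,0)}$ while $t<\tau(x_0)$ and then contributes $\delta_{\tau(x_0)}$ to the outflow trace at $\pi_j(L_j)$. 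A symmetric argument applies to a boundary Dirac injected at time $s_0$, yielding the characteristic $t\mapsto \Phi_t(0,s_0)$ and the exit time $\varsigma(s_0)$. Integrating these elementary identities against $dm_0^j$ and $dm^j_{x=\pi_j(0)}$, and splitting the domain of integration according to whether the characteristic has already exited the arc, produces precisely the right-hand sides of \eqref{candidate} and \eqref{outnu}; the $\max\{0,\cdot\}$ clauses take care of the regime where $\tau^{-1}(t)$ or $\varsigma^{-1}(t)$ falls outside its natural range.

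To close the argument, I would verify that the candidate measures so constructed satisfy Definition~\ref{def:sol} restricted to $e_j$, by changing variables $x\mapsto\Phi_t(x,0)$ and $s\mapsto\Phi_t(0,s)$ in the weak formulation and checking cancellation of the transport term against the time-derivative term; uniqueness for the linear transport problem on the arc, which follows from Theorem~\ref{existencenet}, then forces agreement with the actual traces $m^j_t$ and $m^j_{x=\pi_j(L_j)}$. The main obstacle I anticipate is the rigorous bookkeeping at the transition thresholds where $\tau^{-1}$ and $\varsigma^{-1}$ reach the endpoints $0$ and $L_j$: at these times the two partial contributions to $m^j_t$ (from initial data still inside the arc and from boundary mass already injected) exchange some of their weight, and one must check that the disintegrations \eqref{eq:dec.space-time}--\eqref{eq:dec.time-space} match consistently on both sides, with no spurious atom created at the sink endpoint.
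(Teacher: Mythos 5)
The paper does not actually prove this lemma: it is recalled from \cite{Cam_Dem_Tos} without argument, so there is no in-paper proof to measure yours against. Your outline is nonetheless correct and is exactly the expected one --- decouple the arc via the localized integration-by-parts formula \eqref{int3}, use the Bochner representation $\mu=\int_{\cT}\delta_x\,d\mu(x)$ together with linearity to reduce to Dirac initial or boundary data, push each Dirac along the characteristics of \eqref{hidden_ODE} until its exit time $\tau(x_0)$ (resp.\ $\varsigma(s_0)$), and close by verifying the weak formulation for the resulting candidate and invoking the uniqueness in Theorem~\ref{existencenet}. This is precisely the philosophy the authors announce in Section~\ref{sec:notation} (``it is sufficient to study the corresponding propagation of a Dirac measure''). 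Two points deserve slightly more care than your sketch gives them: first, the interchange of the solution operator with the Bochner integral should be justified, which follows from linearity plus the continuous dependence estimate in Theorem~\ref{existencenet}(i); second, since $v$ is only assumed nonnegative, a characteristic may stall and never reach $\pi_j(L_j)$, so $\tau$ and $\varsigma$ are merely monotone and possibly infinite, and their ``inverses'' must be read as generalized inverses --- this, together with the half-open versus closed intervals in \eqref{candidate}--\eqref{outnu}, is exactly what resolves the threshold bookkeeping you correctly flag as the delicate step.
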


\begin{proof}[Proof of Theorem \ref{existence}]
We can observe that, since the velocity $v$ is uniformly bounded on $\G$, we can restrict the proof to the case of networks with a single junction. The general case can be easily obtained adding new arcs and repeating the same argument.

Hence, we consider a simple   network  with  $\cV$ given by   two vertices $\{S,V\}$, where $S$ is a source and $V$ is an internal vertex, and $\cE$ given by an   arc  $e_1$ connecting $S$ to $V$  and by   $n-1$ unbounded arcs $e_k\in \text{Out}(V)$. Due to this choice, we observe that all the paths $\gamma \in \mathcal{A}(x)$ are subsets of $(e_1, e_k)$ if $x \in e_1$, otherwise they are subsets of $e_k$ if $x \in e_k$.

The solution can be written as $m = \int_0^T \delta_{(x,t)}dm_t(x)dt$, where $m_t = \sum_{k=1}^n m^k_t.$

If $k=1$,
by \eqref{candidate} with $S=\pi_1(0)$ and $V=\pi_1(L_1)$, the solution restricted to $e_1$ is given by
\begin{align*}
	m^1_{t} =\int_{[0,\,\max\{0,\,(\tau_1)^{-1}(t)\}]}\,\delta_{\Phi^{e_1}_t(x,\,0)}\,dm_0^1(x)
		+\int_{(\max\{0,\,(\varsigma_1)^{-1}(t)\},\,t]}\delta_{\Phi^{e_1}_t(0,\,s)}\,d \sigma_0(s); \\
\end{align*}
otherwise for $k \in \{2,\hdots, n\}$, by $\eqref{candidate}$ with $V=\pi_k(0)$, on $e_k$ it is given  by
\[
 m^k_t  = \int_{[0, \max\{0,(\tau_k)^{-1}(t)\}]} \,\delta_{\Phi^{e_k}_t(y,\,0)}dm^k_0(x) + \int_{(\max\{0, (\varsigma_k)^{-1}(t)\},t]}\delta_{\Phi^{e_k}_t(0,\,s)}d m_{x=V}^k(s).
\]
Observe that the first term at the right hand-side of the previous equation is the push-forward of the mass $m_0^k$ which at time $t=0$ is in $e_k$; the second term is a percentage of the mass in $e_1$ which flows in $e_k$.

Using the transmission condition $m_{x=V}^k = p_{1k}\cdot m_{x=V}^{1}$ and recalling  that by \eqref{outnu} we have
\[
m^1_{x=V} =\int_{(\max\{0,\,(\tau_1)^{-1}(T)\},\,L_1]}\delta_{\tau_1(x)}\,dm_0^1(x)
 	+\int_{[0,\,\max\{0,\,(\varsigma_1)^{-1}(T)\}]}\delta_{\varsigma_1(s)}\,d\sigma_0(s),
\]
we get
\begin{equation}\label{pz2}
\begin{split}
\int_{(\max\{0, (\varsigma_k)^{-1}(t)\},t]}&f(\Phi_t^{e_k}(0,s),t)d m_{x=V}^k(s) \\
&=\int_{(\max\{0, (\tau_1)^{-1}(t)\}, L_1]}f(\Phi^{e_k}_t(0, \tau_1(x)),t)p_{1k}(\tau_1(x))dm^1_0(x)\\
&+ \int_{[0, \max\{0, (\varsigma_1)^{-1}(t)\}]}f(\Phi^{e_k}_t(0, \varsigma_1(s)), t)p_{1k}(\varsigma_1(s))d\sigma_0(s)\\
&=\int_{(\max\{0, (\tau_1)^{-1}(t)\}, L_1]}f\big(\Phi^{e_k}_t(\Phi^{e_1}_{\tau_1(x)}(x,0), \tau_1(x)),t\big)p_{1k}(\tau_1(x))dm^1_0(x) \\
&+ \int_{[0, \max\{0, (\varsigma_1)^{-1}(t)\}]}f(\Phi^{e_k}_t(\Phi_{\varsigma_1(s)}(0,s), \varsigma_1(s)), t)p_{1k}(\varsigma_1(s))d\sigma_0(s)\\
&= \int_{(\max\{0, (\tau_1)^{-1}(t)\}, L_1]}f(\Phi^\gamma_t(x,0),t)p_{1k}(\tau_1(x))dm^1_0(x) \\
&+ \int_{[0, \max\{0, (\varsigma_1)^{-1}(t)\}]}f(\Phi^\gamma_t(0,s), t)p_{1k}(\varsigma_1(s))d\sigma_0(s),
\end{split}
\end{equation}
Lastly, we observe that $m_t^1$ can be split in $n-1$ parts according to the distribution terms $p_{1k}$. Indeed, if we write $m_t^1 = \sum_{k=2}^{n}(p_{1k}\circ\theta^\gamma_0)\cdot m^1_t$, then
$$ m_t = m^1_t + \sum_{k=2}^n m_t^k = \sum_{k=2}^n (((p_{1k}\circ\theta^\gamma_0)\cdot m^1_t) + m^k_t). $$
Concerning the first term, observing that   $\tau_1(x) = \theta^\gamma_0(x,0)$ and $\varsigma_1(s) = \theta_0(0,s)$, we compute for any $f \in C^\infty_0(\G\times[0,T])$
\begin{equation}\label{pz1}
\begin{split}
\dual{ (p_{1k}\circ\theta^\gamma_0)m^1_t}{f} &= \int_{[0, \max\{0,(\tau_1)^{-1}(t)\}]} f(\Phi^{e_1}_t(x,0),t)p_{1k}(\theta^\gamma_0(x,0))dm^1_0(x) \\&+ \int_{(\max\{0, (\varsigma_1)^{-1}(t)\},t]}f(\Phi_t^{e_1}(0,s),t)p_{1k}(\theta^\gamma_0(0,s))d\sigma_0(s)\\&=\int_{[0, \max\{0,(\tau_1)^{-1}(t)\}]} f(\Phi^{e_1}_t(x,0),t)p_{1k}(\tau_1(x))dm^1_0(x) \\&+ \int_{(\max\{0, (\varsigma_1)^{-1}(t)\},t]}f(\Phi_t^{e_1}(0,s),t)p_{1k}(\varsigma_1(s))d\sigma_0(x).
\end{split}
\end{equation}

By \eqref{pz2}, \eqref{pz1} it follows that, from the parametrization used for each arc,
\begin{align*}
\dual{(p_{1k}\circ\theta^\gamma_0)\cdot m^1_t}{f} + \dual{m^k_t}{f} &= \int_{e_1} f(\Phi_t^\gamma(x,0), t) p_{1k}(\theta^\gamma_0(x,0))dm^1_0(x) +\int_{e_2} f(\Phi^\gamma_t(x,0),t)dm^k_0(x)\\
&+\int_{[0,t]}f(\Phi^\gamma(0,s),t)p_{1k}(\theta^\gamma_0(0,s))d\sigma_0(s).
\end{align*}
If we sum the previous formula over $e_k\in \text{Out}(V)$ we have
\begin{equation}\label{1ton}
\dual{m_t}{f} = \int_{[0,t]}\sum_{\gamma \in \mathcal{A}(x)}f(\Phi^\gamma_t(0,s),t)p_{\gamma}(0,s)d\sigma_0(s) + \int_{\G}\sum_{\gamma \in \mathcal{A}(x)}f(\Phi^\gamma_t(x,0),t)p_{\gamma}(x,0)dm_0(x),
\end{equation}
Hence we have proved formula \eqref{1ton} in the special case of the simple network as above. If we consider the same network with multiple sources, we sum \eqref{1ton} over $x_i \in \cS$ and have the thesis.

Finally, the case of more complex networks can be addressed by replacing in the last part the arc $i$ in $\gamma=(x,e,V,i)$ with another part and then repeating the argument used in this proof.
\end{proof}

\begin{remark}
We observe that formula~\eqref{globchar} is equivalent to the superposition principle introduced in \cite{ambrosio2008BOOK}. Indeed, assuming for simplicity that $\mathcal{S}=\emptyset$, there is a bijective correspondence among paths, as in Definition \ref{path}, and trajectories in \eqref{hidden_ODE}: given $\gamma \in \mathcal{A}(x)$, then $ \gamma = \Phi^{\gamma}(x,[0,T])$. Therefore
$$\mathcal{A}(x) = \{\Phi \in AC([0,T], \Gamma): \Phi \hspace{1 mm}\text{satisfies}\hspace{1 mm} \eqref{hidden_ODE}\}.$$
Let us define $\eta_x \in \cM^+(\mathcal{A}(x))$ by
$$ \eta_{x} = \sum_{\Phi \in \mathcal{A}(x)} p_{\Phi(x,[0,T])}(x,0)\delta_{\Phi(x,0)}. $$
Then \eqref{globchar} can be written as
\begin{align*}
m_t &= \int_\G \int_{\mathcal{A}(x)} \delta_{\Phi_t(x,0)} d\eta_x(\Phi) dm_0(x) \\&= \int_\G \int_{\mathcal{A}(x)} \delta_{(\Phi\circ e_t)(x,0)} d\eta_x(\Phi) dm_0(x)\\&= \int_{\G \times \mathcal{A}(\G)} \delta_{\Phi(x,0)}d(e_t \# \eta),
\end{align*}
where $e_t$ is the evaluation in $t$, i.e. $e_t(\Phi) = \Phi_t$, and $\eta \in \cM^+(\G\times\mathcal{A}(\G))$ is defined by $\eta = m_0 \otimes \eta_x$. This is exactly the form of the superposition principle mentioned above.
\end{remark}

\section{The nonlinear transport problem}\label{sec:existence}
This section is devoted to the study of the nonlinear transport problem, i.e. the transport problem  with the velocity field depending on the distribution of the particles on the network.

We consider the problem
\begin{equation}\label{pbl:ntl}
	\begin{cases}
		\p_t m+\partial_x(v[m_t]m)=0 & \text{in } \G\times[0,T] \\
		m_{t=0}=m_0 \\
		m_{x=x_i}=\sigma^i_0 & \forall\,x_i\in\mathcal{S} \\
		m^{j}_{x=x_i}=\sum_{k\in\text{Inc}(x_i)}p_{kj}\cdot m^{k}_{x=x_i} & \forall\,x_i\in\mathcal{V}\setminus\mathcal{S},\ \forall\,e_j\in\text{Out}(x_i)
	\end{cases}
\end{equation}
with $v[m]$, $m_0$, $\sigma_0$ satisfying the assumptions stated in Section \ref{sec:notation}. Concerning the interpretation of the initial/boundary conditions and the definition of solution, we argue as in Section \ref{sec:linear}. In particular, thanks to the integration by part formulas \eqref{int1}-\eqref{int2} (with $v(x)$ replaced by $v[m](x)$) we can state:
\begin{definition}\label{def:sol_nonlinear}
A measure-valued solution to \eqref{pbl:ntl} is a finite measure $m \in \cM^+(\G\times[0,T])$ such that for every $f \in C^1_0(\G\times[0,T]),$
$$ \dual{m_{t=T} - m_0}{f} - \dual{\sigma_0}{f} = \dual{m}{\p_t f + v[m_t]\p_x f}, $$
and $\forall\,x_i\in\mathcal{V}\setminus\mathcal{S}$, $\forall\,e_j\in\text{Out}(x_i)$,
\begin{equation}\label{eqn:cond}
\dual{m^{j}_{x=x_i}}{f} = \sum_{e_k \in \text{Inc}(x_i)}\dual{m^{k}_{x=x_i}}{p_{kj}f}.
\end{equation}
\end{definition}

To prove the core result of the paper, i.e. the existence of a measure-valued solution to \eqref{pbl:ntl}, we introduce a semi-discretization-in-time procedure which allows us to approximate the nonlinear problem by a family of linear problems: we define a partition of the time interval $[0,T]$ in a family of subintervals $[t_k, t_k+\Delta t]$ and on each of these intervals we solve the linear problem \eqref{pbl:tl} with the nonlinear velocity $v[m_t]$ replaced by the linear one $v[m_{t_k}]$. In such a way we obtain a sequence of measures $\{m^{\Delta t}\}$ defined on $[0,T]$. Using the results of Section \ref{sec:linear}, we prove that for $\Delta t\to 0^+$ this sequence converges (upon subsequences) to a measure $m\in\cM^+(\G\times [0,T])$, which is a solution of \eqref{pbl:ntl} in the sense of Definition \ref{def:sol_nonlinear}.

Let $N\in\N$, set $\Delta t^{N}:=T/ 2^N$ and define a partition of $[0,T]$ by the intervals $I^N_n := [t^N_n; t^N_{n+1})$, where $t_{n}^N  := n\Delta t^{N}, n= 0, \dots, 2^N$ (in the following we will write $t_n$ in place of $t^N_n$ when the dependence on $N$ is clear by the context). We consider the $2^N$ problems iteratively defined by
\begin{equation*}
	\begin{cases}
		\p_tm+\partial_x(v[m_{t_n}]m)=0 & \text{in } \G\times I^N_n \\
		m_{t=t_n}=m_{t_n}  \\
		m_{x_i\in\mathcal{S}}=\sigma_{0}\llcorner{I^N_n} \\
		m^{j}_{x=x_i}=\sum_{k\in\text{Inc}(x_i)}p_{k j}\cdot m^{k}_{x=x_i} & \forall\,e_j\in\text{Out}(x_i),\ \forall\,x_i\in\mathcal{V}\setminus\mathcal{S},
	\end{cases}
\end{equation*}
where $\sigma_{0}\llcorner{I^N_n}$ is the restriction of $\sigma_0$ to the interval $I^N_n$. We point out that the velocity $v[m_{t_n}]$ is linear on $\G\times I^N_n$ for every $n=0,\dots,2^N-1$. Therefore, owing to Theorem \ref{existencenet}, there exists a unique measure $m^{N,n}\in\cM^{+}(\G\times I^N_n)$ satisfying
\begin{equation}\label{id1}
\dual{m^{N,n}_{t=t^N_{n+1}} - m^{N,n}_{t=t^N_n}}{f} = \dual{ \sigma_{0}\llcorner{I^N_n}}{f} + \int_{t^N_n}^{t^N_{n+1}}\dual{m^{N,n}_{t}}{(\p_t + v[m^{N,n}_{t^N_n}]\p_x)\cdot f(\cdot,t)}dt
\end{equation}
and
\begin{equation}\label{id2}
\dual { (m^{N,n})^j_{x=x_i}\llcorner{I^N_n}} {f} = \sum_{k \in \text{Inc}(x_i)}\dual {p_{kj} (m^{n,N})^k_{x=x_i}{\llcorner{I^N_n}}}{f},
\quad \text{$\forall e_j \in \text{Out}(x_i)$, $\forall x_i \in \mathcal{V}\setminus\mathcal{S}$}
\end{equation}
for every $f \in C^{\infty}_0(\G\times \overline I^N_n)$.

We now denote by $m^N: [0,T] \to \cM^+(\G)$  the map defined  by
\begin{equation}\label{sol_app}
m^N_t=m^{N,n}_t\qquad \text{ for $t \in  I^N_n$, $n= 0, \hdots, 2^N$.}
\end{equation}
We first give some regularity properties of $m^N$ (the proofs of the next two results are postponed to the Appendix \ref{appendix}).
\begin{proposition}\label{lemma1}
For any $t\in [0,T]$, the measure $m^N_t$ is bounded in $(\cM^+(\G), \|\cdot\|_{BL}^\ast)$, uniformly in $N$,   i.e.
there exists  a constant $C = C(T)>0$ such that
\begin{equation}\label{eqnlemma1}
\|m^N_t\|^\ast_{BL} \leq C (\|m_0\|_{BL}^\ast + \|\sigma_0\|^\ast_{BL}), \qquad\forall t\in [0,T].
\end{equation}
Moreover, there exists a constant $C>0$ such that
\begin{equation}\label{lip}
\|m^N_t - m^N_s\|_{BL}^\ast \leq \sigma_0([s,t)) + C |t - s|,
\end{equation}
for every $0\leq s < t \leq T$.
\end{proposition}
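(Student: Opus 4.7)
The plan is to exploit that on each subinterval $I^N_n$ the measure $m^N$ coincides with the solution $m^{N,n}$ of a linear transport problem whose velocity $v[m^N_{t_n}]$ satisfies the hypotheses of Section \ref{sec:notation} uniformly in $N$ and $n$: by (H1) and (H2), the bound $V_{\max}$ and the Lipschitz constant $L$ do not depend on the measure inserted into $v[\cdot]$, so the constants produced by Theorem \ref{existencenet} can be kept independent of $N$ and $n$.

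For the uniform mass bound \eqref{eqnlemma1}, I would use the fact that $\|\mu\|_{BL}^\ast = \mu(\G)$ for $\mu \in \cM^+(\G)$. Testing the weak formulation \eqref{id1} against a sequence of cutoff functions $f_R \in C^\infty_0(\G)$ with $f_R \to 1$ (to deal with unbounded arcs) and using that the transmission condition \eqref{id2} is mass-preserving (since $\sum_j p_{kj}(t)=1$ by \eqref{eq:pijk.sum.1}), I would obtain
$$m^{N,n}_{t^N_{n+1}}(\G) - m^{N,n}_{t^N_n}(\G) \leq \sigma_0(I^N_n),$$
where the inequality accounts for possible (nonnegative) loss of mass escaping to infinity along the unbounded arcs. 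Telescoping over $n=0,\hdots,2^N-1$ yields $m^N_t(\G) \leq m_0(\G) + \sigma_0([0,t])$, which is \eqref{eqnlemma1} (essentially with $C=1$).

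For the continuity estimate \eqref{lip}, I argue in two steps. If $s, t$ belong to the same subinterval $I^N_n$, then \eqref{lip} follows directly from Theorem \ref{existencenet}.ii) applied to the linear problem satisfied by $m^{N,n}$, with constant uniform in $N, n$ thanks to the observation above. For general $s < t$, say $s \in I^N_{n_0}$ and $t \in I^N_{n_1}$ with $n_0 < n_1$, I interpolate through the grid points $t^N_{n_0+1}, \hdots, t^N_{n_1}$, apply the within-subinterval estimate to each piece (using the triangle inequality and the continuity $m^{N,n-1}_{t^N_n} = m^{N,n}_{t^N_n}$ built into the iterative definition of $m^N$), and sum: the terms $C|\cdot|$ telescope to $C(t-s)$, while the contributions of $\sigma_0$ on consecutive subintervals add up to $\sigma_0([s,t))$ by additivity of the measure.

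The main obstacle is ensuring uniformity in $N$ of the constants, in particular checking that the constant $C$ supplied by Theorem \ref{existencenet}.ii) depends only on the uniform quantities $V_{\max}$ and $L$ given by (H1)-(H2), and not on the particular measure $m^N_{t_n}$ frozen in $v[m^N_{t_n}]$. A secondary technical point is the cutoff/truncation argument needed in the mass bound to justify testing with $f\equiv 1$ on a network with unbounded arcs; this should be routine given that $m_0$ and $\sigma_0$ are finite positive measures.
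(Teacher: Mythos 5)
Your proof is correct but follows a different route from the paper's, at least for the mass bound \eqref{eqnlemma1}. The paper does not test the weak formulation against cutoffs of the constant function; instead it applies the representation formula \eqref{globchar} on each subinterval $I^N_n$ with initial datum $m^N_{t_n}$, uses $\sum_{\gamma\in\mathcal{A}(x)}p_\gamma(x)=1$ and $\|f\|_{BL}\le 1$ to get $\|m^N_t\|_{BL}^\ast\le\|m^N_{t_n}\|_{BL}^\ast+\|\sigma_0\llcorner(t_n,t]\|_{BL}^\ast$, and iterates. Your mass-conservation argument is more elementary and avoids Theorem \ref{existence} altogether (though you do need to justify that internal-vertex boundary terms cancel when summing \eqref{int3} over arcs, which is exactly the stochasticity $\sum_j p_{kj}=1$ you invoke, plus the cutoff argument on unbounded arcs); the paper's route has the advantage that the same representation formula immediately yields the continuity estimate as well. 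For \eqref{lip} your decomposition through the grid points is identical to the paper's; the only difference is that you cite Theorem \ref{existencenet}.ii) for the within-subinterval estimate, whereas the paper re-derives it from \eqref{globchar} via the displacement bound $d(\Phi^\gamma_t(x,t_n),x)\le V_{max}|t-t_n|$, obtaining the explicit constant $C=(1+V_{max})(\|m_0\|_{BL}^\ast+\|\sigma_0\|_{BL}^\ast)$. One small imprecision in your uniformity discussion: the constant in Theorem \ref{existencenet}.ii) (as in the paper's explicit one) depends not only on $V_{max}$ and $L$ but also on the mass of the initial and boundary data of the linear problem, i.e.\ on $\|m^N_{t_n}\|_{BL}^\ast$; this is harmless only because you establish \eqref{eqnlemma1} first, so that these masses are bounded uniformly in $N$ and $n$ — your ordering of the two steps is therefore essential and should be stated as such.
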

\begin{proposition}\label{momentact}
Assume that
\begin{equation}\label{hyp:moment}
m_0 \text{ has finite $p$-moment on } \G, \quad p=1,2.
\end{equation}
Then $m^N_t$ has finite $p$-moment, $p=1,2$, on $\G$ for all $t\in [0,T]$. In particular, there exists a positive constant $C=C(T, V_{max},\|m_0\|_{BL}^\ast,\|\sigma_0\|_{BL}^\ast)$ such that
\[
 \dual{m^N_t}{d^p}  \leq C,
\]
for every $t \in [0,T]$ and every $N \in \N$.
\end{proposition}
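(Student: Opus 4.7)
The plan is to iterate the representation formula of Theorem~\ref{existence} across the subintervals $I^N_n$, exploiting the uniform speed bound $V_{max}$ to ensure that each step adds only a controlled amount to the moment. Fix a vertex $x_0 \in \cV$ and set $d(\cdot) := d_\G(\cdot, x_0)$; by Lemma~\ref{lemmoment} and the finiteness of $\cV$ it suffices to bound $\dual{m^N_t}{d^p}$ uniformly in $N$ and $t \in [0,T]$ for $p=1,2$.

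On each interval $I^N_n$ the scheme solves a linear problem, so Theorem~\ref{existence} represents $m^N_t$ (for $t \in I^N_n$) as a push-forward of $m^N_{t_n}$ along the admissible paths, plus a push-forward of $\sigma_0\llcorner I^N_n$ from the sources. Since every characteristic satisfies $|\tfrac{d}{dt}\Phi^\gamma_t(y,s)| \le V_{max}$, the triangle inequality on $\G$ gives $d(\Phi^\gamma_t(y,s)) \le d(y) + V_{max}(t-s)$; combining this with $\sum_{\gamma \in \mathcal{A}(y)} p_\gamma(y,s) = 1$ and pairing the representation with $d^p$ yields the one-step estimate
\[
\dual{m^N_t}{d^p} \le \int_\G \bigl(d(y) + V_{max}(t-t_n)\bigr)^p\, dm^N_{t_n}(y) + \sum_{x_i \in \cS} \int_{t_n}^t \bigl(d(x_i) + V_{max}(t-s)\bigr)^p d\sigma_0^i(s).
\]

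For $p=1$ the bracket expands linearly, so telescoping over $n = 0, \ldots, 2^N - 1$ and invoking the uniform mass bound from Proposition~\ref{lemma1} produces
\[
\sup_{t \in [0,T]} \dual{m^N_t}{d} \le \dual{m_0}{d} + V_{max} T\, C + (D + V_{max} T)\, \normBL{\sigma_0},
\]
with $D := \max_{x_i \in \cS} d(x_i)$. For $p=2$ I expand $(a+b)^2 = a^2 + 2ab + b^2$ \emph{exactly}, obtaining
\[
\dual{m^N_{t_{n+1}}}{d^2} \le \dual{m^N_{t_n}}{d^2} + 2 V_{max} \Delta t^N \dual{m^N_{t_n}}{d} + V_{max}^2 (\Delta t^N)^2\, m^N_{t_n}(\G) + (\text{boundary term}).
\]
Since $\dual{m^N_{t_n}}{d}$ is already controlled by the $p=1$ step, a second telescoping produces the required uniform bound on $\dual{m^N_t}{d^2}$, with constants depending only on $T$, $V_{max}$, $\normBL{m_0}$ and $\normBL{\sigma_0}$.

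The main pitfall to avoid is using Jensen in the form $(a+b)^p \le 2^{p-1}(a^p + b^p)$ at the $p=2$ stage: that would place a factor $2$ in front of $\dual{m^N_{t_n}}{d^2}$ at every step, and after $2^N$ iterations produce an exponential-in-$N$ blow-up which would destroy the uniformity. Keeping the exact Newton expansion and feeding back the first-moment estimate is what makes the $p=2$ case close uniformly in~$N$.
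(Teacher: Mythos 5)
Your proof is correct and follows essentially the same strategy as the paper's: a one-step moment increment of order $V_{max}\Delta t^N$ obtained from the push-forward representation, telescoped over the $2^N$ subintervals, with the exact Newton expansion (rather than Jensen) plus the already-established first-moment bound closing the $p=2$ case. The only cosmetic difference is that you work with the global path representation formula \eqref{globchar} on all of $\G$ at once, whereas the paper reduces via Lemma \ref{lemmoment} to the unbounded arcs and uses the local formulas \eqref{candidate}--\eqref{outnu} arc by arc; the resulting estimates are the same.
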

Inequality \eqref{lip} shows that the map $t\mapsto m^N_t$ is in general not Lipschitz continuous in $t$ if the source measure $\sigma_0$ is not absolutely continuous with respect to the Lebesgue measure. To prove the convergence of $m^N$,  we need to assume
that $\sigma_0 \in \cM^+(\cS\times[0,T))$ is absolutely continuous with respect to the Lebesgue measure $\mathcal{L}(dt)$ on $[0,T)$ for every source $x_i \in \cS$, i.e.
\begin{equation}\label{hyp:continuity}
  \sigma_0^i  \ll \mathcal{L}(dt)\qquad \forall\,x_i\in\cS,
\end{equation}
but see Remark \ref{rem:source} below for the case in which $\sigma_0$ contains a finite number of atoms.

We have the following existence result for \eqref{pbl:ntl}:
\begin{theorem}\label{exist1}
Assume \eqref{hyp:moment} and \eqref{hyp:continuity}, then the sequence $\{m^N\}_{N\in \N}$ defined in \eqref{sol_app} converges (upon subsequences) to a map $m:[0,T] \to \cM^+(\G)$  in $C([0,T];\cM^+(\G))$.
In particular,
\begin{equation}\label{conv1}
\lim\limits_{N\to +\infty}\sup_{t\in [0,T]}\|m^N_t-m_t\|_{BL}^\ast=0.
\end{equation}
Moreover, the measure $m:=\int_{0}^T\int_\G \delta_{(x,t)}dm_t(x)dt$ is a solution to \eqref{pbl:ntl} in the sense of Definition \ref{def:sol_nonlinear}.
\end{theorem}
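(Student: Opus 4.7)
The plan is to prove the theorem via a compactness–stability argument: use the Arzelà--Ascoli theorem to extract a subsequence of $\{m^N\}$ converging uniformly in $t$ to a limit $m\in C([0,T];\cM^+(\G))$, and then pass to the limit in the weak identities \eqref{id1}--\eqref{id2} satisfied by each $m^N$.

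First I would verify the hypotheses of Arzelà--Ascoli for the family $\{m^N\}$ viewed as maps $[0,T]\to(\cM^+(\G),\normBL{\cdot})$. Equicontinuity in $t$ is immediate from \eqref{lip} in Proposition \ref{lemma1}: the term $C|t-s|$ is uniform in $N$, while under hypothesis \eqref{hyp:continuity} the cumulative distribution $t\mapsto\sigma_0([0,t))$ is (uniformly) continuous on $[0,T]$, so $\sigma_0([s,t))$ admits a uniform modulus of continuity. For pointwise relative compactness at each $t$, I would combine the uniform mass bound \eqref{eqnlemma1} with the uniform moment bound $\dual{m^N_t}{d^p}\leq C$ from Proposition \ref{momentact}: a Chebyshev-type estimate on the unbounded arcs gives, for every $\varepsilon>0$, a compact $K\subset\G$ with $m^N_t(\G\setminus K)<\varepsilon$ uniformly in $N$ and $t$, i.e. tightness. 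Prokhorov's theorem then yields relative compactness in the weak topology, which on bounded subsets of $\cM^+(\G)$ is metrized by $\normBL{\cdot}$ (Kantorovich--Rubinstein). Arzelà--Ascoli thus provides a subsequence (not relabelled) and a limit $m\in C([0,T];\cM^+(\G))$ satisfying \eqref{conv1}.

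Next I would pass to the limit in \eqref{id1}. Summing over $n=0,\dots,2^N-1$ yields, for every $f\in C^1_0(\G\times[0,T])$,
\[
\dual{m^N_T-m_0}{f}-\dual{\sigma_0}{f}=\int_0^T\dual{m^N_t}{\p_t f+v^N(\cdot,t)\p_x f}\,dt,
\]
where $v^N(x,t):=v[m^N_{t_n}](x)$ for $t\in I^N_n$. The left-hand side converges to $\dual{m_T-m_0}{f}-\dual{\sigma_0}{f}$ by \eqref{conv1}. For the right-hand side, hypothesis (H3) gives, for $t\in I^N_n$,
\[
\|v^N(\cdot,t)-v[m_t]\|_\infty\leq L\bigl(\|m^N_{t_n}-m_{t_n}\|^\ast_{BL}+\|m_{t_n}-m_t\|^\ast_{BL}\bigr).
\]
The first term vanishes uniformly in $t$ by \eqref{conv1}, and the second by uniform continuity of $t\mapsto m_t$ (being the uniform limit of an equicontinuous family) together with $|t-t_n|\leq\Delta t^N\to 0$. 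Combined with the uniform BL-convergence of $m^N_t$, the boundedness $\|v^N\|_\infty\leq V_{\max}$, and dominated convergence, the right-hand side converges to $\dual{m}{\p_t f+v[m_t]\p_x f}$, establishing the bulk identity in Definition \ref{def:sol_nonlinear}.

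Finally I would verify the transmission condition \eqref{eqn:cond} for $m$. Each $m^N$ satisfies \eqref{id2} on every subinterval $I^N_n$, hence also on $[0,T]$; the traces $(m^N)^{j}_{x=x_i}$ are defined by the integration-by-parts formula \eqref{int3}. Since all other terms in \eqref{int3} converge (by uniform BL-convergence of $m^N$ and the convergence of the nonlinear term argued above), the traces $(m^N)^{j}_{x=x_i}$ converge in the sense of distributions on $[0,T]$ to $m^{j}_{x=x_i}$; the $BV$-regularity of $p_{kj}$ allows passage to the limit on the right-hand side of \eqref{id2}. I expect the main obstacle to lie precisely in the passage to the limit in the nonlinear term $v[m^N_{t_n}]m^N_t$: even though both factors converge individually, one has to pair the measure-dependent velocity evaluated at the \emph{frozen} time $t_n$ with the measure at the \emph{current} time $t$, which is why the combined use of (H3), uniform BL-convergence, and the equicontinuity estimate \eqref{lip} inherited by $m$ is essential to conclude.
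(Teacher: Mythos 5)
Your proposal is correct and follows essentially the same route as the paper: Ascoli--Arzel\`a compactness via the equicontinuity estimate \eqref{lip} together with the moment bounds of Proposition \ref{momentact}, followed by passage to the limit in the summed identities \eqref{id1}--\eqref{id2} using (H3) and the time-regularity to control the frozen-velocity error, and finally the limit of the trace identities for the vertex condition. The only (immaterial) difference is that you split $v[m^N_{t_n}]-v[m_t]$ through $m_{t_n}$ and invoke the uniform continuity of the limit, whereas the paper splits through $m^N_t$ and uses \eqref{lip} directly.
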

\begin{proof}
We proceed in several steps.

\emph{Step (i): Convergence.}
To show that    $\{m^N\}_{N \in \N}$ is relatively compact in $C([0,T], \cM^+(\G))$, it is sufficient to check that the sequence satisfies the conditions of the Ascoli-Arzel\`a criterion in the space of measures (see \cite{ambrosio2008BOOK}): equicontinuity, tightness and uniform integrability. Equicontinuity is a consequence of Proposition \ref{lemma1}, taking into account that by  \eqref{lip}, \eqref{hyp:continuity} $\{m^N\}_{N \in \N}$  is uniformly Lipschitz continuous in $t$. The other two properties, tightness and uniform integrability, are equivalent to uniform estimates of the first and second moments of the measure $m^N_{t}$. These estimates are proved in Proposition \ref{momentact}. Hence we conclude that that, upon subsequences, there exists $m\in C([0,T], \cM^+(\G))$ such that
\begin{equation}\label{conv}
	m^N_t \to m_t  \quad  \forall t\in [0,T]
\end{equation}
for $N\to\infty$.

\emph{Step (ii): $m$ is a solution.}
We now show that the measure $m\in \cM^+(\G\times [0,T])$ defined by
\begin{equation}\label{limit_measure}
m(dxdt) := m_t(dx)\otimes dt = \int_{0}^T\int_{\G}\delta_{(x,t)}dm_t(x)dt ,
\end{equation}
where the  $m_t$ is as in \eqref{conv}, satisfies
\begin{equation}\label{balance}
	\dual{m_T - m_0}{f} - \dual{\sigma_0}{f} = \int_0^T\dual{m_t}{\p_t f(\cdot,t) + v[m_t]\p_x f(\cdot, t)}dt
\end{equation}
for every $f \in C^{\infty}_0(\G\times [0,T])$. Denote $v_{n}^{N} = v[m^N_{t_{n}^{N}}]$.  Summing over $n$ the identities \eqref{id1} and \eqref{id2}, we get that
the measure
$m^N(dxdt) = m^N_t(dx) \otimes dt   \in \cM^+(\G\times[0,T])$ satisfies
\begin{equation}\label{f1}
\dual {m^N_T - m_0}{f} - \dual {f}{\sigma_0} = \sum_{n=0}^{2^N - 1}\int_{I^N_n}\dual {m^N_t}{(\p_t + v_{n}^{N}\p_x)\cdot f(\cdot, t)} dt,
\end{equation}
for every $f \in C^\infty_0(\G\times[0,T])$.
Passing to the limit for $N \to +\infty$ in \eqref{f1}, we first   observe that  by \eqref{conv1} we have for the left hand side
$$\dual {m^N_T - m_0}{f} \to \dual {m_T - m_0}{f}\quad \text{for $N\to\infty$}.$$
To show the convergence of the  right hand side of \eqref{f1}, we  claim that
\begin{align}\label{claim1}
	\begin{aligned}[c]
		& \sum_{n=0}^{2^N-1}\int_{I^N_n}\left(\dual {m^N_t}{(\p_t+v_{n}^{N}\p_x)\cdot f(\cdot,t)}
			-\dual{m_t}{(\p_t+v[m_t]\p_x)\cdot f(\cdot,t)}\right)dt \\
		&= \int_{I^N_n}\dual{m^N_t-m_t}{(\p_t+v[m_t]\p_x)\cdot f(\cdot,t)}dt+\sum_{n=0}^{2^N-1}\int_{I^N_n}\dual{m^N_t}{(v_{n}^{N}
			-v[m_t])\p_x\cdot f(\cdot,t)}dt
	\end{aligned}
\end{align}
tends to $0$ for $N\to \infty$. Indeed for $f\in C^{\infty}_0(\G\times[0,T])$ by \eqref{conv1} we have, when $N\to\infty$,
\begin{align}\label{claim1a}
	\begin{aligned}[c]
		& \sum_{n=0}^{2^N-1}\int_{I^N_n}\dual{m_t-m^N_t}{(\p_t+v[m_t]\p_x)\cdot f(\cdot,t)} dt \\
		&\leq T\sup_{t\in [0,T]}|\dual{m_t-m^N_t}{(\p_t+v[m_t]\p_x)\cdot f(\cdot,t)}|\leq CT\sup_{t\in [0,T]}\|m_t^N-m_t\|_{BL}^\ast\to 0.
	\end{aligned}
\end{align}
Moreover, for every fixed $n=0,\dots,2^N-1$, $t\in I^N_n$ and $x\in\G$ we have
\begin{equation}\label{stima_vel}
|v_{n}^{N}(x) - v[m_t](x)|  \leq L \|m^N_{t^N_n} - m_t\|_{BL}^\ast \leq L\|m^N_{t^N_n} - m^N_{t}\|_{BL}^\ast + L\|m^N_t - m_t\|_{BL}^\ast.
\end{equation}
From Proposition \ref{lemma1} it results
\[
\|m^N_{t^N_n} - m^N_{t}\|_{BL}^\ast \leq \sigma_{0}([t^N_n, t))+ C|t - t^N_n|,
\]
therefore
\begin{align*}
	& \int_{t^N_n}^{t^N_n+\Delta t^N}\dual{m^N_t}{(v_{n}^{N}-v[m_t])\p_x f(\cdot,t)}dt \\
	&\leq \int_{t^N_n}^{t^N_n + \Delta t^N} \left[ C_1(\sigma_{0}([t^N_n, t))+ |t - t^N_n| + \|m^N_t - m_t\|_{BL}^\ast)\dual{m^N_t}{\p_x f(\cdot, t)}\right]dt \\
	&\leq C_1\left( \sigma_0(I^N_n) \Delta t^N + \frac{1}{2} (\Delta t^N)^2 + \Delta t^N \sup_{t \in I^N_n}\|m^N_t - m_t\|_{BL}^\ast\right)\sup_{t \in I^N_n}\|m^N_t\|_{BL}^\ast,
\end{align*}
where $C_1 = \max \{L, CL\}$. 
Again by Proposition \ref{lemma1}, we have the estimate
$$\sup_{t \in [0,T]}\|m^N_t\|_{BL}^\ast < D$$
for a positive constant $D$ independent of $N$. Hence
\[
\int_{I^N_n}\dual {m^N_t}{(v_{n}^{N} - v[m_t])\p_x f(\cdot, t)} dt \leq DC_1 \Delta t^N \left( \sigma_0(I^N_n) + \frac{1}{2}\Delta t^N + \sup_{t \in I^N_n}\|m^N_t - m_t\|_{BL}^\ast\right),
\]
and therefore by  \eqref{conv}
\begin{align}\label{claim1b}
	\begin{aligned}[c]
		& \left|\sum_{n=0}^{2^N-1}\int_{I^N_n}\dual{m^N_t}{(\p_t+(v_{n}^{N}-v[m_t])\p_x)\cdot f(\cdot,t)}dt\right| \\
		& \leq {DC_1}\left(\Delta t^N\sigma_0([0,T])+\frac{T}{2}\Delta t^N+T\sup_{t\in [0,T]}\|m^N_t-m_t\|_{BL}^\ast\right)\to 0 \quad \text{for } N\to +\infty.
	\end{aligned}
\end{align}

By \eqref{claim1a} and \eqref{claim1b} we get \eqref{claim1}. Hence the measure $m$ satisfies the transport equation \eqref{balance}.

\emph{Step (iii): Vertex condition.}
To conclude  that $m$ defined in \eqref{limit_measure} solves \eqref{pbl:ntl} we further need to show that there exist boundary measures $\{m_{x=x_i}^{j}\}_{x_i \in \cV}^{e_j \in \cE}$, defined by
\begin{align}\label{a3}
\dual{  m^{j}_{x=x_i}}{f}= \int_0^T\dual {m^{j}_t}{(\p_t + v[m_{t}]\p_x) f} dt - \dual {m^{j}_T - m^j_0}{f},
\end{align}
if $e_j \in\text{Inc}(x_i)$, or
\begin{align}\label{a4}
 - \dual{  m^{j}_{x=x_i}}{f}= \int_0^T\dual {m^j_t}{(\p_t + v[m_{t}]\p_x) f} dt -  \dual {m^{j}_T - m^j_0}{f},
\end{align}
if $e_j\in\text{Out}(x_i)$. Then, we need to prove that the vertex condition \eqref{eqn:cond} is satisfied.

Let $f$ be a $C^\infty_0(\G\times[0,T])$ function such that there exists a unique vertex $x_i \in \mathcal{V}$ which belongs to the support of $f(\cdot, t),$ for every $t \in [0,T]$. We have previously seen that
$$ \dual{m^N_{T} - m^N_{0}}f  = \dual {\sigma_0} f + \sum_{n=0}^{2^N - 1}\int_{t_n}^{t_{n+1}}\dual{m^N_{t}}{(\p_t + v[m^N_{t_n}]\p_x)\cdot f(\cdot,t)}dt; $$
then, taking into account that the support of $f$ does not contain source vertices, we have
$$ \dual {m^N_T - m_0}{f}= \sum_{n=0}^{2^N - 1}\int_{I^N_n}\dual {m^N_t}{(\p_t + v_{n}^{N}\p_x) f(\cdot, t)}dt; $$
By \eqref{int3}, if $e_j\in\text{Inc}(x_i)$ then
\begin{equation}\label{a1}
\dual{m^{N,j}_{x=x_i}}{f}= \sum_{n=0}^{2^N - 1}\int_{I^N_n}\dual {m^{N,j}_t}{(\p_t + v_{n}^{N}\p_x) f}dt - \dual {m^{N,j}_T - m^{j}_0}{f};
\end{equation}
otherwise, if $e_j\in\text{Out}(x_i)$ then
\begin{equation}\label{a2}
 - \dual{m^{N,j}_{x=x_i}}{f}= \sum_{n=0}^{2^N - 1}\int_{I^N_n}\dual {m^{N,j}_t}{(\p_t + v_{n}^{N}\p_x) f}dt - \dual {m^{N,j}_T - m^{j}_0}{f}.
\end{equation}
Passing to the limit for $N \to +\infty$ in either \eqref{a1} or \eqref{a2}, by \eqref{conv} we get that there exist measures $m_{x=x_i}^j\in\cM(\{x_i\}\times [0,T])$ which satisfy \eqref{a3} or \eqref{a4} and such that $\|m^{N,j}_{x=x_i}-m^j_{x=x_i}\|_{BL}^\ast \to 0$ for $N \to +\infty$. Since by construction $m_{x=x_i}^{N,j} = \sum_{k \in\text{Inc}(x_i)}p_{k e}\cdot m_{x=x_i}^{N,k}$, we get that the same transmission condition \eqref{eqn:cond}  is satisfied by the limit  measure $m$.
\end{proof}

We now extend to the nonlinear transport problem \eqref{pbl:ntl} the representation formula for the solution of the linear problem \eqref{pbl:tl} proved in Theorem\eqref{existence}. Given $m \in \cM^+(\G\times [0,T])$, we denote by $\Phi^\gamma$ the flow map associated to the velocity field $v[m_t]$ restricted to $\gamma$, i.e. $\Phi_s^\gamma(x,s)=x$ and there are $t_0:=s<t_1<\dots<t_n<\hdots$ such that for every $m=0,1,\dots$, we have $\Phi^\gamma([t_m,t_{m+1}])\subset e_{j_m}$ and
\[
\frac{d}{dt}\Phi^{\gamma}_t(x,s) = v[m_t] (\Phi^{\gamma}_{t}(x,s)), \qquad t \in[t_m,t_{m+1}).
\]
\begin{proposition}
If $m\in\cM^+(\G\times[0,T])$ is a solution to \eqref{pbl:ntl} then $m_t$ is given by
\begin{equation}\label{globchar_nl}
	m_t=\int_\G \sum_{\gamma \in \mathcal{A}(x)}\delta_{(\Phi^\gamma_t(x,0),t)}p_{\gamma}(x,0)dm_0(x)
		+\sum_{x_i\in\cS}\int_{[0,t]}\sum_{\gamma\in\mathcal{A}(x_i)}\delta_{(\Phi^\gamma_{t}(0,s),t)}p_{\gamma}(0,s)d\sigma_0^i(s)
\end{equation}
for every $t\in [0,T]$, where the coefficients $p_\gamma$ are defined as in \eqref{pgamma}.
\end{proposition}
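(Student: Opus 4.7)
The idea is to freeze the measure inside the velocity and reduce to the linear representation formula already established in Theorem~\ref{existence}. More precisely, let $m\in\cM^+(\G\times[0,T])$ be an arbitrary solution to \eqref{pbl:ntl} in the sense of Definition \ref{def:sol_nonlinear}, and define the time-dependent velocity field
\[
\tilde v(x,t):=v[m_t](x),\qquad (x,t)\in\G\times[0,T].
\]
By assumption (H1) this field is nonnegative and bounded by $V_{\max}$; by (H2) it is Lipschitz in $x$ on each arc with constant independent of $t$; and by (H3) together with the continuity/estimates on the map $t\mapsto m_t$ implied by the equation itself (which one reads off from the identity in Definition~\ref{def:sol_nonlinear}, analogous to \eqref{lip}), the map $t\mapsto\tilde v(x,t)$ is continuous uniformly in $x$. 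In particular, the non-autonomous ODE $\dot y=\tilde v(y,t)$ on each arc is well-posed, so the flow map $\Phi^\gamma$ described in the statement is well-defined, and so are the exit times $\theta^\gamma_k(x,s)$ and the coefficients $p_\gamma(x,s)$ in \eqref{pgamma}.

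Next, I would note that $m$ automatically solves the \emph{linear} transport problem \eqref{pbl:tl} with velocity $\tilde v$ in place of $v$. Indeed, the weak formulation in Definition \ref{def:sol_nonlinear} reads
\[
\dual{m_T-m_0}f-\dual{\sigma_0}f=\dual{m}{\p_t f+\tilde v(x,t)\p_x f}\qquad\forall f\in C^1_0(\G\times[0,T]),
\]
together with the very same transmission conditions on the inner vertices. Hence $m$ is a measure-valued solution of the linear transport problem driven by the prescribed (time-dependent) field $\tilde v$, with the same initial datum $m_0$, boundary datum $\sigma_0$ and distribution matrix $P(t)$.

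The plan is then to apply Theorem~\ref{existence}, up to the mild extension that its representation formula holds for time-dependent velocity fields of the regularity described above. The proof of Theorem~\ref{existence} given in the paper rests purely on the disintegration of $m$ along a single arc (Lemma with formulas \eqref{candidate}--\eqref{outnu}), on the existence of the classical flow, and on the iterative use of the transmission condition $m^j_{x=x_i}=\sum_k p_{kj} m^k_{x=x_i}$. Each of these ingredients remains valid when $v$ depends measurably on $t$: the exit-time functions $\tau$, $\varsigma$ are still well-defined, the push-forward along the flow $\Phi^{e_j}$ on each arc still yields the local representation \eqref{candidate}--\eqref{outnu}, and the gluing across junctions via $P(t)$ is untouched. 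Running the same computation \eqref{pz2}--\eqref{1ton} with $\Phi^\gamma$ built from $\tilde v$ instead of $v$ produces, for any $t\in[0,T]$,
\[
m_t=\int_\G\sum_{\gamma\in\mathcal{A}(x)}\delta_{(\Phi^\gamma_t(x,0),t)}\,p_\gamma(x,0)\,dm_0(x)
+\sum_{x_i\in\cS}\int_{[0,t]}\sum_{\gamma\in\mathcal{A}(x_i)}\delta_{(\Phi^\gamma_t(0,s),t)}\,p_\gamma(0,s)\,d\sigma_0^i(s),
\]
which is exactly \eqref{globchar_nl}.

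The main technical point, and the step I would write out most carefully, is the justification that Theorem~\ref{existence} indeed transfers to time-dependent velocity fields of the form $\tilde v(x,t)=v[m_t](x)$. Concretely, one has to check: (a) existence and uniqueness of the flow $\Phi^\gamma$ along a path, which requires enough regularity of $t\mapsto\tilde v(\cdot,t)$ — this is supplied by (H3) combined with the continuity of $t\mapsto m_t$; (b) measurability and continuity of the exit-time maps $\tau_{j_k}$, $\varsigma_{j_k}$ and of $\theta^\gamma_k$, so that the coefficients $p_\gamma$ and the compositions $\Phi^\gamma_t(\Phi^{e_{j_{k-1}}}_{\tau_{j_{k-1}}}(\cdot),\tau_{j_{k-1}}(\cdot))=\Phi^\gamma_t(\cdot,\cdot)$ used in \eqref{pz2} make sense; (c) the finiteness of the sum over $\gamma\in\mathcal A(x)$, which follows as in the linear case from the uniform bound $\tilde v\le V_{\max}$ combined with $\sum_\gamma p_\gamma=1$. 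Once these are in hand, the derivation of \eqref{globchar_nl} is a verbatim repetition of the proof of Theorem~\ref{existence} with $\Phi^\gamma$ generated by $\tilde v$, and the conclusion follows.
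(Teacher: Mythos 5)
Your argument is correct in outline but takes a genuinely different route from the paper. The paper's proof is a one-paragraph passage to the limit in the semi-discrete scheme of Section~\ref{sec:existence}: each approximation $m^N$ is a concatenation of solutions of autonomous linear problems, so it satisfies the linear representation formula \eqref{globchar} piecewise; the estimate \eqref{stima_vel} gives $\sup_{x\in e_j}|v^N_j(x)-v[m_t](x)|\to 0$, hence uniform convergence of the frozen flow maps to $\Phi^\gamma$, and \eqref{globchar} converges to \eqref{globchar_nl}. You instead freeze the measure inside the velocity, observe that any solution $m$ of \eqref{pbl:ntl} is a solution of the linear problem driven by the non-autonomous field $\tilde v(x,t)=v[m_t](x)$, and invoke a time-dependent version of Theorem~\ref{existence}. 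What your route buys is that it applies directly to an \emph{arbitrary} solution $m$, which is what the statement literally asserts and what Proposition~\ref{prop:cont.dep} later needs; the paper's limit argument, as written, only establishes the formula for the solution constructed by the scheme, so your version is in this respect cleaner. What it costs is that Theorem~\ref{existence} (and the underlying trace Lemma \eqref{candidate}--\eqref{outnu} imported from \cite{Cam_Dem_Tos}) is stated only for autonomous $v(x)$; your reduction rests on an extension to velocities $\tilde v(x,t)$ that you assert but do not carry out. To make it airtight you would need to (a) derive the Lipschitz-in-time estimate $\|m_t-m_s\|^\ast_{BL}\le C|t-s|+\sigma_0((s,t])$ for an arbitrary solution directly from Definition~\ref{def:sol_nonlinear}, so that $\tilde v$ has the regularity required for a well-posed flow, and (b) rerun the single-arc trace characterization and the computation \eqref{pz2}--\eqref{1ton} with a time-dependent field. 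Neither step is problematic, but they constitute real work that the paper's approximation argument gets for free from the results already proved for the scheme. Both proofs are acceptable; yours is more self-contained logically, the paper's is shorter given the machinery already in place.
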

\begin{proof}
We observe that from \eqref{stima_vel} it follows
$$ \sup_{x\in e_j}|v_{j}^{N}(x)-v[m_t](x)|\to 0 \qquad \text{for } N\to +\infty,\, j\in J. $$
The previous estimate implies the uniform convergence of the respective flow maps on a given path $\gamma$, hence the convergence of \eqref{globchar} to \eqref{globchar_nl}.
\end{proof}

\begin{proposition}[Continuous dependence]\label{prop:cont.dep}
Given initial data $m_0^1, m_0^2\in\cM^+(\Gamma\times\{0\})$ and boundary data $\sigma_0^1,\sigma_0^2\in\cM^+(\cS\times [0,\,T])$ satisfying the assumptions \eqref{hyp:moment} and \eqref{hyp:continuity}, let $m^1$ and $m^2$ be the corresponding solutions. Then
$$ \sup_{t\in [0,T]}\|m^1_t-m^2_t\|_{BL}^\ast\leq K(\|m^1_0-m^2_0\|_{BL}^\ast+\|\sigma^1_0-\sigma^2_0\|_{BL}^\ast), $$
where $K=K(T)>0$ is a constant.
\end{proposition}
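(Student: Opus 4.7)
The plan is to exploit the representation formula \eqref{globchar_nl} for both $m^1$ and $m^2$ and to reduce the estimate to a Grönwall-type inequality for the quantity $t \mapsto \|m^1_t - m^2_t\|_{BL}^\ast$. Fix a test function $\phi \in BL(\G)$ with $\|\phi\|_{BL} \leq 1$. Denoting by $\Phi^{i,\gamma}$, $p^i_\gamma$, $\theta^{i,\gamma}_k$ the flow maps, weights and exit times built from $v[m^i_\cdot]$ for $i=1,2$, one writes
\[
\dual{m^1_t - m^2_t}{\phi} = \Bigl(\cdots\Bigr)_{\text{data}} + \Bigl(\cdots\Bigr)_{\text{flow}} + \Bigl(\cdots\Bigr)_{\text{weights}},
\]
by adding and subtracting the intermediate quantities $\phi(\Phi^{1,\gamma}_t(\cdot))\,p^1_\gamma \,dm_0^2$ and $\phi(\Phi^{1,\gamma}_t(\cdot))\,p^2_\gamma \,dm_0^2$ (and their analogues for the source measures). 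The first piece, accounting for the different initial and boundary data but computed along the \emph{same} characteristic structure, is bounded in standard fashion by $\|m_0^1-m_0^2\|_{BL}^\ast + \|\sigma_0^1-\sigma_0^2\|_{BL}^\ast$, since $\phi \circ \Phi^{1,\gamma}_t$ is Lipschitz with a constant depending only on $V_{\max}$, $L$ and $T$.

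The second piece is the key step. On every arc, $\Phi^{i,\gamma}$ solves an autonomous ODE with velocity $v[m^i_t]$, so by (H3) the two velocity fields satisfy $|v[m^1_t](y) - v[m^2_t](y)| \leq L\|m^1_t - m^2_t\|_{BL}^\ast$. Applying Grönwall's inequality arc-by-arc along the path $\gamma$ and using that each path contains at most finitely many arcs before reaching an infinite terminal one (or a cycle), one obtains
\[
\sup_{s\le \tau\le t}|\Phi^{1,\gamma}_\tau(x,s) - \Phi^{2,\gamma}_\tau(x,s)| \leq C \int_s^t \|m^1_\tau - m^2_\tau\|_{BL}^\ast \,d\tau,
\]
with $C = C(L,V_{\max},T)$. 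Multiplied by the Lipschitz constant of $\phi$ and integrated against $m_0^1$ (and the boundary data), this contributes a term of the form $C\int_0^t \|m^1_\tau - m^2_\tau\|_{BL}^\ast\,d\tau$, with an $L^1$-in-$t$ memory that is precisely what Grönwall can absorb.

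The third piece, involving $p^1_\gamma - p^2_\gamma$, is where the main technical obstacle lies: the routing matrix $P(\cdot)$ is only BV in time, so a pointwise Lipschitz estimate on $p_{kj}(\theta^{1,\gamma}_k) - p_{kj}(\theta^{2,\gamma}_k)$ is unavailable. However, since the exit times $\theta^{i,\gamma}_k$ differ by at most $C\int_0^t\|m^1_\tau - m^2_\tau\|_{BL}^\ast\,d\tau$ (by the previous step and the uniform lower bound on the transit times on each arc), one can bound the contribution of these jumps in an averaged sense against the mass $dm_0^1$ and the Lipschitz test function $\phi$, using the total variation of each $p_{kj}$ on $[0,T]$; the resulting bound is again of order $C\int_0^t\|m^1_\tau - m^2_\tau\|_{BL}^\ast\,d\tau$. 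Putting the three contributions together and taking the supremum over $\phi$, we obtain
\[
\|m^1_t - m^2_t\|_{BL}^\ast \leq C_1\bigl(\|m_0^1-m_0^2\|_{BL}^\ast + \|\sigma_0^1-\sigma_0^2\|_{BL}^\ast\bigr) + C_2\int_0^t \|m^1_s - m^2_s\|_{BL}^\ast\,ds,
\]
and Grönwall's lemma yields the claim with $K = C_1 e^{C_2 T}$. As a sanity check, this mirrors the linear statement in Theorem \ref{existencenet}(i): the nonlinearity in $v$ merely adds the $\int_0^t$ memory term that Grönwall absorbs into a larger constant.
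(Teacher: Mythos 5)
Your proposal follows essentially the same route as the paper: both start from the representation formula \eqref{globchar_nl}, split $\dual{m^1_t-m^2_t}{\phi}$ into a data term and a flow term, bound the data term by $\|m_0^1-m_0^2\|_{BL}^\ast+\|\sigma_0^1-\sigma_0^2\|_{BL}^\ast$, control the separation of the two characteristic flows via (H3) and Gr\"onwall to produce the memory term $C\int_0^t\|m^1_s-m^2_s\|_{BL}^\ast\,ds$, and close with a second application of Gr\"onwall. Those two pieces of your argument are correct and match \eqref{rif1}--\eqref{rif3} in the paper.

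The one place you diverge is your third, ``weights'' piece. You are right that this term exists in principle --- the coefficients $p_\gamma$ in \eqref{pgamma} are evaluated at the exit times $\theta^{i,\gamma}_k$, which differ between the two solutions --- and the paper simply suppresses it by writing the same $p_\gamma(x)$ in both integrals of \eqref{claim2}. However, your proposed way of closing it does not work as sketched. If $p_{kj}$ is merely BV and has a genuine jump at some time $t^\ast$, take $m_0^1$ containing an atom at a point $x_0$ whose exit time equals $t^\ast$; an arbitrarily small perturbation of the data shifts $\theta^{2,\gamma}_0(x_0)$ to the other side of $t^\ast$, so $|p_{kj}(\theta^{1,\gamma}_0)-p_{kj}(\theta^{2,\gamma}_0)|$ is of order of the jump for a set of full $m_0^1$-measure, and no averaging against $dm_0^1$ recovers a bound of order $\int_0^t\|m^1_\tau-m^2_\tau\|_{BL}^\ast\,d\tau$. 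The total variation of $p_{kj}$ controls the sum of its jumps, not the measure of the set of initial points whose exit times straddle a given jump; the latter is dictated by $m_0^1$, which may be atomic. So either one assumes $P(\cdot)$ Lipschitz in time (in which case your piece three closes immediately with a pointwise estimate), or the weights term must be handled differently; the averaged-BV bound you assert is the genuine gap in your write-up, and it is also the step the paper's own proof leaves implicit.
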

\begin{proof}
For a fixed $x\in\G$ we consider a path $\gamma\in \mathcal{A}(x)$ starting from $x$ and the flow maps $\Phi^{1,\gamma},\,\Phi^{2,\gamma}$ associated to $v[m^1_t],\,v[m_t^2]$, respectively.

Let $f \in BL(\G \times [0,T])$ with $\|f\|_{BL}^\ast \leq 1$, then by formula \eqref{globchar_nl} we have
\begin{align} \label{claim2}
	\begin{aligned}[c]
		\dual{m^1_t-m^2_t}{f} &=
			\int_\G\left(\sum_{\gamma\in\mathcal{A}(x)}f(\Phi^{1,\gamma}_t(x,0),t)p_\gamma(x)dm_0^1(x)\right. \\
		&\phantom{=} \left.-\sum_{\gamma\in\mathcal{A}(x)}f(\Phi^{2,\gamma}_t(x,0),t)p_\gamma(x)dm_0^2(x)\right) \\
		&\phantom{=}+ \sum_{x_i\in\cS}\left(\int_{[0,T]}\sum_{\gamma\in\mathcal{A}(x_i)}f(\Phi^{1,\gamma}_t(x_i,s),t)p_{\gamma}(x_i)d\sigma^1_{x_i}(s)\right. \\
		&\phantom{=} \left.-\int_{[0,T]}\sum_{\gamma\in\mathcal{A}(x_i)}f(\Phi^{2,\gamma}_t(x_i,s),t)p_{\gamma}(x_i)d\sigma^2_{x_i}(s)\right).
	\end{aligned}
\end{align}
To estimate the right-hand side in \eqref{claim2} we rewrite the first term as
\begin{align*}
	& \int_\G\left( \sum_{\gamma \in \mathcal{A}(x)}f(\Phi^{1,\gamma}_t(x,0),t)p_\gamma(x) dm_0^1 (x)
		- \int_\G \sum_{\gamma \in \mathcal{A}(x)}f(\Phi^{2,\gamma}_t(x,0),t)p_\gamma(x) dm_0^2 (x)\right) \\
	&\quad +\int_\G \sum_{\gamma \in \mathcal{A}(x)}f(\Phi^{2,\gamma}_t(x,0),t)p_\gamma(x) d(m^{1}_0 - m_0^2)(x) \\
	&\quad +\int_\G \sum_{\gamma \in \mathcal{A}(x)}\left(f(\Phi^{1,\gamma}_t(x,0),t) - f(\Phi^{2,\gamma}_t(x,0),t)\right)p_\gamma(x) dm_0^1 (x).
\end{align*}
Since $\|f\|_{BL}^\ast \leq 1$ and $\sum_{\gamma \in \mathcal{A}(x)}p_{\gamma}(x) = 1$ for every $x \in \G$, we have the  estimate
\begin{equation}\label{rif1}
\int_\G \sum_{\gamma \in \mathcal{A}(x)}f(\Phi^{2,\gamma}_t(x,0),t)p_\gamma(x) d(m^{1}_0 - m_0^2)(x) \leq \|m^1_0 - m^2_0\|_{BL}^\ast.\\
\end{equation}
Moreover ,
\begin{align*}
|f(\Phi^{1,\gamma}_t(x,0),t) - f(\Phi^{2,\gamma}_t(x,0),t)| &\leq d(\Phi^{1,\gamma}_t(x,0), \Phi^{2,\gamma}_t(x,0)) \leq d_{\gamma}((\Phi^{1,\gamma}_t(x,0), \Phi^{2,\gamma}_t(x,0))),
\end{align*}
where $d_{\gamma}$ is the path distance $d$ restricted to  $\gamma.$ It follows
\begin{align*}
d_{\gamma}(\Phi^{1,\gamma}_t(x,0), \Phi^{2,\gamma}_t(x,0))&\leq \int_0^t \left|v[m^1_s](\Phi^{1,\gamma}_s(x,0)) - v[m^2_s](\Phi^{2,\gamma}_s(x,0))\right|ds\\&\leq\int_0^t L \left(\|m^1_s - m^2_s\|_{BL}^\ast + d_{\gamma}(\Phi^{1,\gamma}_s(x,0), \Phi^{2,\gamma}_s(x,0))\right)ds.
\end{align*}
By Gronwall's inequality,
\begin{equation*}
d_{\gamma}(\Phi^{1,\gamma}_t(x,0), \Phi^{2,\gamma}_t(x,0)) \leq L\left(\int_0^t \|m^1_s - m^2_s\|_{BL}^\ast  ds\right) e^{Lt},
\end{equation*}
and consequently
\begin{equation*}
|f(\Phi^{1,\gamma}_t(x,0),t) - f(\Phi^{2,\gamma}_t(x,0),t)| \leq L\left(\int_0^t \|m^1_s - m^2_s\|_{BL}^\ast  ds\right) e^{Lt}.
\end{equation*}
The previous inequality implies
\begin{align*}
	& \int_\G \sum_{\gamma \in \mathcal{A}(x)}\left(f(\Phi^{1,\gamma}_t(x,0),t) - f(\Phi^{2,\gamma}_t(x,0),t)\right)p_\gamma(x) dm_0^1 (x) \\
	& \leq L\left(\int_0^t \|m^1_s - m^2_s\|_{BL}^\ast  ds\right) e^{Lt}\|m^1_0\|_{BL}^\ast.
\end{align*}
Proceeding in a similar way for the second term in \eqref{claim2}, we   obtain the   inequality
\begin{equation}\label{rif3}
\begin{split}
&\sum_{x_i \in \cS}\left(\int_{[0,T]} \sum_{\gamma \in \mathcal{A}(x_i)} f(\Phi^{1,\gamma}_t(x_i,s),t)p_{\gamma}(x_i)d\sigma^1_{x_i}(s) - \int_{[0,T]} \sum_{\gamma \in \mathcal{A}(x_i)} f(\Phi^{2,\gamma}_t(x_i,s),t)p_{\gamma}(x_i)d\sigma^2_{x_i}(s) \right)  \\
&\leq \|\sigma^1_0 - \sigma^2_0\|_{BL}^\ast + L\left(\int_0^t \|m^1_s - m^2_s\|_{BL}^\ast  ds\right) e^{Lt}\|\sigma^1_0\|_{BL}^\ast
\end{split}
\end{equation}
Using \eqref{rif1}--\eqref{rif3} in \eqref{claim2} we get
$$\dual{m^1_t - m^2_t}{f} \leq \|m^1_0 - m^2_0\|_{BL}^\ast + \|\sigma^1_0 - \sigma^2_0\|_{BL}^\ast + C\int_0^t \|m^1_s - m^2_s\|_{BL}^\ast ds,$$
where $C=L e^{L T} (\|m^1_0\|_{BL}^\ast + \|\sigma_0^1\|_{BL}^\ast)$. Taking the supremum with respect to $f$  we get
\[
\|m^1_t - m^2_t\|_{BL}^\ast \leq (\|m^1_0 - m^2_0\|_{BL}^\ast + \|\sigma^1_0 - \sigma^2_0\|_{BL}^\ast) + C\int_0^t \|m^1_s - m^2_s\|_{BL}^\ast  ds.
\]
Finally, applying again Gronwall's inequality we obtain
\[
\|m^1_t - m^2_t\|_{BL}^\ast \leq (\|m^1_0 - m^2_0\|_{BL}^\ast + \|\sigma^1_0 - \sigma^2_0\|_{BL}^\ast)e^{Ct}. \qedhere
\]
\end{proof}

As an immediate consequence of the continuous dependence result stated in Proposition~\ref{prop:cont.dep} we have
\begin{corollary}
The solution of the nonlinear transport problem \eqref{pbl:ntl}  is unique.
\end{corollary}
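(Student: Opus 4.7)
The plan is to deduce uniqueness directly from the continuous dependence estimate just proved. Given two solutions $m^1,m^2\in\cM^+(\Gamma\times [0,T])$ of \eqref{pbl:ntl} corresponding to the \emph{same} initial datum $m_0$ and boundary datum $\sigma_0$ (satisfying \eqref{hyp:moment} and \eqref{hyp:continuity}), I would simply apply Proposition~\ref{prop:cont.dep} with $m_0^1=m_0^2=m_0$ and $\sigma_0^1=\sigma_0^2=\sigma_0$. The right-hand side of the continuous dependence inequality vanishes identically, so
$$\sup_{t\in[0,T]}\|m_t^1-m_t^2\|_{BL}^{\ast}\leq K\bigl(\|m_0-m_0\|_{BL}^{\ast}+\|\sigma_0-\sigma_0\|_{BL}^{\ast}\bigr)=0,$$
which forces $m_t^1=m_t^2$ for every $t\in[0,T]$.

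To conclude $m^1=m^2$ as elements of $\cM^+(\Gamma\times[0,T])$, I would invoke the disintegration representation $m^i(dx\,dt)=dm_t^i(x)\otimes dt$ used throughout the paper (cf.\ the formula \eqref{limit_measure} and the decomposition \eqref{eq:dec.space-time}); since the $t$-slices coincide for almost every $t$, the two space-time measures coincide. There is no real obstacle here: the whole content was packed into Proposition~\ref{prop:cont.dep}, and this corollary is just the specialization of the Lipschitz-type bound to equal data.
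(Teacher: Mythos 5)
Your argument is exactly the one the paper intends: the corollary is stated as an immediate consequence of Proposition~\ref{prop:cont.dep}, obtained by taking equal initial and boundary data so that the right-hand side of the continuous dependence estimate vanishes, and then passing from equality of the slices $m_t^1=m_t^2$ to equality of the space-time measures via the disintegration $m(dx\,dt)=dm_t(x)\otimes dt$. The proposal is correct and matches the paper's approach.
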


\begin{remark}\label{rem:source}
For traffic models on road networks, it is reasonable to consider measures without Cantorian part but the assumption \eqref{hyp:continuity} is quite restrictive since it also excludes the presence of atomic terms in the source measure $\sigma_0$. Recall that \eqref{hyp:continuity} gives the uniform continuity with respect to $t$ of the map $m^N_t$, $t\in [0,T]$, which is necessary in order to apply the Ascoli-Arzel\`{a} criterion. We now explain how to bypass this difficulty and to extend the results of this section,  in particular Theorem \ref{exist1}, to the case of a source measure of the type
$$ \sigma_0 = \sum_{x_i \in \cS}(\sigma_{ac,0}^{x_i} + \sigma_{d,0}^{x_i}), $$
where $\sigma_{ac,0}^{x_i}\ll \mathcal{L}(dt)$ and $\sigma_{d,0}^{x_i}$ is an atomic finite measure in $\cM^+(\cS\times[0,T])$ with a finite number of atoms.

Consider first the case of a source measure $\sigma_{0} = \delta_{(x_i,\tau)}$, for $x_i \in \cS$ and $\tau \in (0,T)$. We can apply Theorem \ref{exist1} in $[0,\tau]$ where  $\sigma_0\equiv 0$ is absolutely continuous with respect to $\mathcal{L}(dt)$   to obtain the existence  of  a solution $m$  to \eqref{pbl:ntl} in $[0,\tau]$. Then we consider    \eqref{pbl:ntl} in $[\tau, T]$   with  initial condition   $m_{\tau} + \delta_{(x_i,\tau)}$ and boundary measure $(\sigma_{0})\llcorner{(\tau,T]}\equiv 0$. Again, since $\sigma_0\equiv 0$ is absolutely continuous with respect to $\mathcal{L}(dt)$ in $[\tau,T]$, we obtain a solution of the problem in $[\tau,T]$. Gluing together  the solutions previously obtained in $[0,\tau]$ and $[\tau, T]$, we obtain a  piecewise continuous solution  of \eqref{pbl:ntl} on $[0,T]$. Clearly this procedure can be repeated if the source measure $\sigma_0$ contains a finite number of atoms. The  resulting solution  of \eqref{pbl:ntl} is piecewise Lipschitz continuous  on a finite number of disjoint intervals in $[0,T]$.
\end{remark}

\section{A multiscale model for traffic flow on networks}\label{trafficmodel}
In this section we describe a nonlocal velocity $v[m]$ suitable to describe and predict the evolution of traffic flow on a road network.

There exists a wide literature on nonlocal fluxes for vehicular and pedestrian traffic: for example in \cite{Colombo_Garavello} a nonlocal term is used to modify the direction of motion of pedestrians and in \cite{Crippa} to describe interactions among different populations. An interesting possibility for vehicular traffic is proposed in \cite{Goatin_Rossi,Goatin_Scialanga}, where nonlocal terms are used as parameters to select the right flux in classical hyperbolic models.

Taking inspiration from similar models describing collective dynamics of crowds, see \cite{cristiani2014BOOK}, we consider a positive velocity fields given by
\begin{equation}\label{velocity}
 v[m](x) := v_{f}(x) - v_{i}[m](x)
\end{equation}
where $v_{f}: \G\to\R^+$ is the desired velocity, or free flow speed, representing the speed of a vehicle in a free road, and $v_{i}[m](x)$ is the interaction speed due to the presence of a vehicle distribution $m\in\cM^+(\G)$ on the network $\G$. Our aim is to identify an  appropriate expression of $v[m](x)$ consistent with the traffic flow model and satisfying the hypotheses $\textbf{(H1)--(H3)}$.

Concerning the free flow speed $ v_{f}(x)$, which depends only on the state variable $x$, we assume that this function is positive, bounded and Lipschitz continuous on each arc  of the network $\G$. Hence $\textbf{(H1)--(H3)}$ are easily verified.

Since for every $x\in\G$ the interaction velocity is a function $v_i[\cdot](x):\cM^+(\G)\to\R^+$, it is natural to define $v_i$ as the functional
$$ v_i[m](x) := \int_{\G}K(x,y)dm(y), $$
with interaction kernel $K\in BV(\G\times\G)$. If $K$ is nonnegative and bounded by a positive constant $C$, then for every $x\in\G$ it results
$$ |v_i[m](x)|\leq C m(\G), $$
hence $\textbf{(H1)}$ is satisfied.

Given $e_j\in\cE$ and $x\in e_j$, for every $m,\,\mu\in\cM^+(\G)$ we have, by the boundedness of $K$,
$$ |v[m](x)-v[\mu](x)|=|\int_{\G}K(x,y)d(m-\mu)(y)|\leq C\|m-\nu\|_{BL}^\ast $$
therefore also $\textbf{(H2)}$ holds.

The Lipschitz continuity with respect to $x$ is more delicate. In \cite[Section 5]{cristiani2014BOOK}, with reference to the whole Euclidean space $\R^d$, the authors consider a kernel for the interaction velocity of the form $K(x,y)=k(|x-y|)\chi_{\mathcal D(x)}(y)$, where $k:\R^+\to\R$ is a Lipschitz continuous non-increasing function, $\chi_{\mathcal{D}(x)}$ is the characteristic function of the set $\mathcal{D}(x)$
and $\mathcal D(x)$ is the visual field of a car driver at $x$ defined as
$$ \mathcal D(x):=\{y\in\R^d \text{ such that } |x-y|\leq R\} $$
for a given  visual radius $R>0$.

In order to generalize this approach to the case of networks we consider an interaction kernel of the form
$$ K(x,y)=k(d_\G (x,y))\eta_{x}(y), $$
where $k$, again Lipschitz continuous and non-increasing, represents the interactions among the vehicles on the network as a function of the distance. The crucial point is to properly define the function $\eta_{x}(y)$ representing the visual field of the drivers. We assume that a driver has a knowledge only of the distribution of the vehicles on the  roads adjacent to his/her current position and, on the basis of this information, he/she gives a certain priority to a possible route. After defining the visual field as
\[
\mathcal D(x)= \{y\geq x \text{ such that } d_\G(x,y)\leq R\},
\]
where $y\geq x$ is meant with respect to the orientation of the network, we assume that
\[R\le \min_{e_j\in \cE} \mathcal{L}(e_j) .\]
Hence,  given  $x\in e_k$, if  $x_i=\pi_k(L_k)\in\cV$  we have  $\mathcal D(x)\subset e_k\cup(\bigcup_{e_j\in\text{Out}(x_i)}e_j)$. Moreover,  for any $e_j\in\text{Out}(x_i)$, a weight $\alpha_{kj}$  is prescribed with the properties
\begin{align*}
	& 0\leq\alpha_{kj}\leq 1, \quad \sum_{j=1}^{J}\alpha_{kj}=1, \\
	& \alpha_{kj}=0 \quad \text{if either } e_k\cap e_j=\emptyset \text{ or } e_j\to e_k.
\end{align*}
We point out that the difference between the coefficients $p_{kj}(t)$ in \eqref{eq:pijk.sum.1} and $\alpha_{kj}$ is that the former represents the capacity of the junction $e_k\cap  e_j$ to allocate the traffic distribution while the latter translates the preference assigned to a given route by the drivers depending on the observed traffic distribution.
With the previous definitions, we consider an interaction velocity at $x\in e_k$ given by
$$ v_i[m](x)=\sum_{e_j\in\cE}\alpha_{kj}\int_{\G}k(d_{\G}(x,y))\chi_{\mathcal D(x)\cap (e_k\cup e_j)}(y)dm(y). $$
Note that  the support of $\chi_{\mathcal D(x)\cap (e_k\cup e_j)}$  is given by
$$ \{y\in e_k\cup e_j \text{ such that } x\to y,\,d_{\G}(x,y)\leq R\}, $$
which is isomorphic to a classical visual field for each $e_j$ for which $\alpha_{kj}\neq 0$.

To prove the Lipschitz continuity of $v_i[m]$ in the $x$-variable it is enough to check this property for the term
$$ \int_{\G}k(d_{\G}(x,y))\chi_{\mathcal D(x)\cap (e_k\cup e_j)}(y)dm(y). $$
Without loss of generality, we work directly with a parametrization of $e_k \cup e_j$ and we assume that $e_k$ is parametrized as $[0, L_k]$ and $e_j$ as $[L_k, L_k + L_j]$. In these terms,
$$ \mathcal D(x)\cap (e_k\cup e_j)=\{y\in[0,L_k+R]\subset [0,L_k+ L_j]\,:\,x\to y,\,|x-y|\leq R\}=:\mathcal{A}(x). $$
Taken $x_1,x_2\in [0, L_k]$ with $x_2 \to x_1$ and denoted $h:=|x_2 - x_1|$, we observe that $\mathcal{A}(x_2)=\mathcal{A}(x_1)+h$; then
$$ \chi_{\mathcal{A}(x_2)}(y)=
	\begin{cases}
		1 & \text{if } y-h\in\mathcal{A}(x_1) \\
		0 & \text{otherwise,}
	\end{cases}
$$
hence
$$ \chi_{\mathcal{A}(x_2)}(y)=\chi_{\mathcal{A}(x_1)}(y-h)=(\chi_{\mathcal{A}(x_1)}\circ\tau_{-h})(y) $$
where $\tau_{-h}$ is the translation by $-h$. It follows
\begin{align*}
	& \int k(|x_1-y|)\chi_{\mathcal{A}(x_1)}(y)dm(y)-\int k(|x_2-y|)\chi_{\mathcal{A}(x_2)}(y)dm(y) \\
	&= \int k(|x_1-y|)\chi_{\mathcal{A}(x_1)}(y)dm(y)-\int k(|x_1-(y-h)|)\chi_{\mathcal{A}(x_1)}(y-h)dm(y) \\
	&= \int\left[k(|x_1-y|)\chi_{\mathcal{A}(x_1)}(y)-(k(|x_1-\cdot|)\chi_{\mathcal{A}(x_1)}(\cdot))\circ\tau_{-h}(y)\right]dm(y) \\
	&= \int k(|x_1-y|)\chi_{\mathcal{A}(x_1)}(y)d(m-\tau_{-h}\#m)(y) \\
	&\leq \|m-\tau_{-h}\#m\|_{BL}^\ast=Kh=K|x_2-x_1|,
\end{align*}
whence the Lipschitz continuity with respect to $x$ as desired.

Notice that \eqref{velocity} does not guarantee the positivity of $v$. However, if we consider the velocity field
$$ v[m](x)=\max\{v_{f}(x)-v_{i}[m](x),0\} $$
then the boundedness and the Lipschitz continuity with respect to $x$ and $m$ are preserved and moreover $v$ is nonnegative.

\section*{Acknowledgements}
A.T. is member of GNFM (Gruppo Nazionale per la Fisica Matematica) of INdAM (Istituto Nazionale di Alta Matematica), Italy.

A.T. acknowledges that this work has been written within the activities of a starting grant funded by ``Compagnia di San Paolo'' (Turin, Italy).

\appendix
\section{Proofs of theorems of section~\ref{sec:existence}}\label{appendix}

\begin{proof}[Proof of Proposition \ref{lemma1}]
Let $t \in [0,T]$ and $n \in \{0,\hdots, 2^{N - 1}\}$ such that $t \in I_n^N$.
Then, by the representation formula \eqref{globchar}, we write
\[
\dual {m^N_t}{f} = \int_\G \sum_{\gamma \in \mathcal{A}(x)}f(\Phi^\gamma_t(x,t_n),t)p_{\gamma}(x) dm^N_{t_n}(x) + \sum_{x_i \in \cS} \int_{[0,T]}\sum_{\gamma \in \mathcal{A}(x_i)}f(\Phi^{\gamma}_{t}(x_i, s),t)p_{\gamma}(x_i)d\sigma_0^i(s).
\]
Hence, for every $f \in BL(\G\times[0,T])$ such that $\|f\|_{BL} \leq 1$ it follows
\begin{align*}
|\dual{m^N_t}{f}|&\leq \int_{\G} \|f\|_{BL} \left(\sum_{\gamma \in \mathcal{A}(x)}p_{\gamma}(x)\right) dm^N_{t_n}(x) + \sum_{x_i \in \cS}\int_{[0,T]}\|f\|_{BL} \left(\sum_{\gamma \in \mathcal{A}(x_i)}p_{\gamma}(x_i)\right)d\sigma_0^i(s)\\
&\leq\|m^N_{t_n}\|_{BL}^\ast + \sum_{x_i \in \cS}\|(\sigma_0^i)\llcorner{(t_{n}, t]}\|_{BL}^\ast=\|m^N_{t_n}\|_{BL}^\ast + \|(\sigma_{0})\llcorner{(t_n, t]}\|_{BL}^\ast,
\end{align*}
where we have used the property $\sum_{\gamma \in \mathcal{A}(x)}p_{\gamma}(x) = 1$ for all $x \in \G.$ Taking the supremum over $f \in BL(\G\times[0,T])$ we get
\[
\|m^N_t\|_{BL}^\ast\leq \|m_{t_n}\|_{BL}^\ast + \|(\sigma_{0})\llcorner{(t_n, t]}\|_{BL}^\ast;
\]
Applying the previous inequality recursively for $m\in  \{0,\hdots, n\}$ we get \eqref{eqnlemma1}.



We now prove \eqref{lip}. Let $N\in\N$ and $s,t \in [0,T]$ such that $s<t$ with $s\in I^N_{n}$, $t\in I^N_{k}$ for $n\neq k$ in $\{0,\dots,2^N-1\}$. This means
$$ t_{n} \leq s < t_{n+1}  < \hdots < t_{k}  \leq t \leq t_{k+1}. $$
Clearly
$$ m^N_t-m^N_s=\left(m^N_t-m^N_{t_{k}}\right)+(m^N_{t_{n+1}}-m^N_s)+\sum_{l=n+1}^{k}(m^N_{t_{l+1}}-m^N_{t_{l}}), $$
which implies
\begin{equation}\label{proofprop1}
\|m^N_t - m^N_s\|_{BL}^\ast \leq \|m^N_{t} - m^N_{t_{k}}\|_{BL}^\ast + \|m^N_{t_{n+1}} - m^N_{s}\|_{BL}^\ast + \sum_{l=n+1}^{m}\|m^N_{t_{l+1}} - m^N_{t_{l}}\|_{BL}^\ast.
\end{equation}
Therefore we need to estimate $\|m^N_{t_{l+1}} - m^N_{t_{l}}\|_{BL}^\ast$. Let $f \in BL(\G\times[0,T])$ such that $\|f\|_{BL}^\ast \leq 1.$ Then, for every $t \in I^N_n$,
\begin{align*}
	|\dual{m^N_t-m^N_{t_n}}{f}| &\leq \int_{\G}\sum_{\gamma\in\mathcal{A}(x)}|f(\Phi^{\gamma}_t(x,t_n),t)-f(x,t_n)|dm^N_{t_n}(x) \\
	&\phantom{\leq} +\left|\sum_{x_i\in\cS}\int_{(t_n, t]}f(\Phi_t(x_i, s),t)d\sigma_0^i(s)\right| \\
	&\leq \int_{\G}\left(\sum_{\gamma\in\mathcal{A}(x)}p_{\gamma}(x)\right)(d(\Phi_t^\gamma(x,t_n),x)+|t-t_n|)dm_{t_n}^N(x)
		+\|(\sigma_0)\llcorner{(t_n, t]}\|_{BL}^\ast.
\end{align*}
By definition of $\Phi^{\gamma}$, it  follows $d(\Phi^\gamma_t(x,t_n), x) \leq \int_{t_n}^t v_n^N (\Phi^\gamma_s(x, t_n))ds \leq |t - t_n| V_{max}.$ 
Then, applying  \eqref{eqnlemma1} and taking the supremum over $f \in BL(\G\times[0,T])$ such that $\|f\|_{BL}\leq 1$, we can write
\begin{equation}\label{ineq1}
\|m^N_t - m^N_{t_n}\|_{BL}^\ast \leq C| t - t_n | + \sigma_0((t_n, t]),
\end{equation}
where $C=(1+V_{max})(\|m_0\|_{BL}^\ast + \|\sigma_0\|_{BL}^\ast)>0.$
Using \eqref{ineq1}
in \eqref{proofprop1}, we get \eqref{lip}.
\end{proof}



\begin{proof}[Proof of Proposition \ref{momentact}]
For simplicity, in this proof we denote $v[m^N_{t^N_n}]$ by $v_n^{N}$ and $d(\cdot ):=d_{\G}(\cdot,V)$ for a fixed $V\in\cV$. Let $N\in\N$, $t\in [0,T]$ and $n\in\{0,\dots,2^N-1\}$ such that $t \in I^N_{n-1}$. By Lemma \ref{lemmoment}, $m^N_t \in \cM^+(\G)$ has finite $p$-moment over $\G$ if and only if it has finite $p$-moment on every arc $e_j\in\cE$ such that $\mathcal{L}(e_j)=+\infty$.

We consider first the case $p=1$. If $e_j \in\text{Inc}(V)\cup\text{Out}(V)\subset\cE$, such that  $\mathcal{L}(e_j) = +\infty$, there are two possibilities:
\begin{enumerate}
\item[i)] $e_j\in\text{Inc}(V)$;
\item[ii)] $e_j\in\text{Out}(V)$.
\end{enumerate}
If i) occurs, we  parametrize $e_j \in \cE$ as $(-\infty; 0]$. For every $t \in I^N_{n-1}$, we denote by $\Phi^{e_j}_t$ the flow map in $e_j$ produced by the velocity $v_{n-1}^{N}$. By the definition in \eqref{tau}, we have
$$ \tau_{n,N}(t) = \inf\{t\geq t_{n-1}^N: \Phi^{e_j}_t(x,t_{n-1}^N) = \pi_j(0)\}. $$
Then the first moment in $e_j$ of $m^N_t$ can be estimated as
\begin{align*}
	\int_{(-\infty; 0]}|x|dm^{N,j}_t(x) &= \int_{(-\infty; \tau^{-1}_{n,N}(t)]}|\Phi^{e_j}_{t}(x,t_{n - 1})|dm^{N,j}_{t_{n-1}}(x) \\
	&\leq \int_{(-\infty; \tau^{-1}_{n,N}(t)]}|x|dm^{N,j}_{t_{n-1}}(x) + m^N_{t_{n-1}}((-\infty; \tau^{-1}_{n,N}(t)])V_{max}\Delta t^N \\
	&\leq \int_{(-\infty; 0]}|x|dm^{N,j}_{t_{n-1}} + m^{N,j}_{t_{n-1}}(e_j)\Delta t^N V_{max}.
\end{align*}
Applying iteratively the previous argument for $k\in \{0,1,\hdots, n-1\}$, we get
\begin{align*}
	\int_{e}|x|dm^N_t(x) &\leq \int_{e}|x|dm^j_0(x) + V_{max}\Delta t^N \sum_{k=0}^{n-1}m^N_{t_{k}}(e_j) \\
	&\leq \int_{e}|x|dm^j_0(x) + V_{max}\frac{T}{2^N}\sum_{k=0}^{2^N-1}m^{N}_{t_k}(e_j).
\end{align*}
By Lemma \ref{lemma1} we have
\begin{equation}\label{pezzo0}
	\int_{e}|x|dm^N_t(x)  \leq \int_{e}|x|dm^j_0(x) + V_{max} T C.
\end{equation}
For the measure $m^{N,j}_{x=V}\in\cM^+([0,T])$, namely the projection of $m^{N,j}$ at $x_i$, by \eqref{outnu} we estimate
\begin{equation}\label{pezzo1}
\|m_{x=V}^{N,j}\|_{BL}^\ast = m_{x=V}^{N,j}([0,T]) \leq m_0^j ((-V_{max}T,0]) \leq m^j_0(e_j).
\end{equation}

If ii) occurs, we have a similar proof. Indeed, thanks to the characterisation \eqref{outnu}, we can write
$$ \int_{[0,+\infty)}|x|dm^{N,j}_t(x)=\int_{[0,+\infty)}|\Phi^{e_j}_{t}(x,t_{j-1})|dm^{N,j}_{t_{n-1}}(x)
	+\int_{[\varsigma^{-1}_j(t),t]}|\Phi^{e_j}_{t}(0,s)|dm^{N,j}_{x=V}(s). $$
The first integral at the right-hand side can be estimated like in \eqref{pezzo0}, while for the second one we have
\begin{align*}
	\int_{[\sigma^{-1}_e(t),t]}|\Phi^e_{t}(0,s)|dm_{x=x_i}^j(s) &\leq V_{max}\int_{[\sigma^{-1}_e(t),t]}|t-s|dm_{x=V}^j(s) \\
	&\leq V_{max}\Delta t^N m_{x=V}^j( I_{n-1}^N),
\end{align*}
which, thanks to \eqref{pezzo1} and Theorem \ref{existencenet}, is finite and bounded by a constant which depends on $\|m_0\|_{BL}^\ast$, $\|\sigma_{0}\|_{BL}^\ast$, $V_{max}$ and $T$.

To conclude the proof, we need to show an analogous statement for $p=2$. However, we can observe that
\begin{align*}
	|\Phi^{e_j}_{t}(x,t_{n-1})|^2 &= |x|^2+\left|\int_{t_{j-1}}^t v_{j-1}^{N}(\Phi_s^e(x,t_{j-1}))\right|^2
		+ 2|x|\cdot\left|\int_{t_{n-1}}^t v_{n-1}^{N}(\Phi_s^{e_j}(x,t_{n-1}))\right| \\
	&\leq |x|^2+(V_{max}\Delta t^N)^2+2V_{max}\Delta t^N|x|,
\end{align*}
and, for $s \in [\varsigma^{-1}_j(t),t]$,
$$ |\Phi_t^{e_j}(0,s)|^2=\left|\int_{s}^t v_{j-1}^{N}(\Phi_u^{e_j}(0,s))du\right|^2\leq (V_{max}\Delta t^N)^2. $$
Then, taking advantage of the uniform bound on the first moment, we repeat the argument used for $p=1$ to obtain the uniform bound also on the second moment.
\end{proof}

\bibliographystyle{amsplain}
\bibliography{references}
\end{document}